\pgfplotsset{compat=1.15}
\pgfplotsset{compat=1.15}
\DeclareMathOperator{\interior}{int}
\DeclareMathOperator{\diam}{diam}
\DeclareMathOperator{\Aut}{Aut}
\newcommand{\cal}{\mathcal}
\newcommand{\R}{\mathbb{R}}
\newcommand{\C}{\mathbb{C}}
\newcommand{\Z}{\mathbb{Z}}
\newcommand{\PGL}{\mathrm{PGL}}
\newcommand{\PO}{\mathrm{PO}}
\DeclareMathOperator{\prox}{prox}
\DeclareMathOperator{\rel}{rel}
\newcommand{\PR}{\mathrm{P}}
\newcommand{\V}{{V}}
\newcommand{\ie}{i.e.\ }
\newcommand{\eg}{e.g.\ }
\newcommand{\resp}{resp.\ }
\newtheorem{prop}{Proposition}[section]
\newtheorem{thm}[prop]{Theorem}
\newtheorem{lemma}[prop]{Lemma}
\newtheorem{fait}[prop]{Fact}
\newtheorem{cor}[prop]{Corollary}
\theoremstyle{definition}
\newtheorem{defi}[prop]{Definition}
\theoremstyle{remark}
\numberwithin{equation}{section}
\author{Pierre-Louis Blayac}
\newcommand{\Addresses}{{% additional braces for segregating \footnotesize
  \bigskip
  
  \textsc{Laboratoire Alexander Grothendieck, Institut des Hautes \'Etudes Scientifiques, Universit\'e Paris-Saclay, 35 route de Chartres, 91440 Bures-sur-Yvette, France}\par
  \textit{E-mail address}: \texttt{pierre-louis.blayac@universite-paris-saclay.fr}
}}
\title{The boundary of rank-one divisible convex sets}
\begin{document}

\begin{abstract}
 We prove that for any non-symmetric irreducible divisible convex set, the proximal limit set is the full projective boundary.
\end{abstract}

\maketitle

\section{Introduction}

This note concerns the rich topic of divisible convex sets, which started more than sixty years ago with the work of Kuiper \cite{kuiper} and Benz\'ecri \cite{benz_varlocproj}, and is today very active. We refer to Benoist's survey \cite{benoist_survey}, which presents many interesting results and shows how diverse the mathematics interacting with this topic are. Let us fix for the whole paper a finite-dimensional real vector space $\V$. A subset of the projective space $\PR(\V)$ is \emph{properly convex} if it is convex and bounded in some affine chart. A properly convex open subset $\Omega\subset\PR(\V)$ is \emph{divisible} if it is \emph{divided} by some discrete subgroup of $\Gamma\subset\PGL(\V)$, \ie $\Gamma$ acts cocompactly on $\Omega$. We denote by $\Aut(\Omega)\subset\PGL(\V)$ the closed subgroup consisting of the elements $g$ that preserve $\Omega$.

\subsection{Structural results on divisible convex sets}\label{sec:structural result}

The result that we discuss here continues a line of structural results on divisible convex sets $\Omega$. These make the link between several kinds of regularity properties of the projective boundary $\partial\Omega\subset\PR(\V)$, of algebraic properties of $\Aut(\Omega)$ and its discrete cocompact subgroups, and of dynamical properties of the action of $\Aut(\Omega)$ and its subgroups on $\PR(\V)$.

One cornerstone of these structural results is the following result due to Vey \cite[Th.\,3]{vey}. Consider a divisible convex set $\Omega\subset\PR(\V)$. Then 
\begin{itemize}
 \item either there exists two proper subspaces $V_1,V_2\subset\V$ with $\V=V_1\oplus V_2$ and two properly convex open cones $C_1\subset V_1$ and $C_2\subset V_2$ such that $\PR(C_1)\subset\PR(V_1)$ and $\PR(C_2)\subset\PR(V_2)$ are divisible convex sets and $\Omega=\PR(C_1+C_2)$ --- in this case $\Omega$ is said to be \emph{reducible};
 \item or any cocompact closed subgroup of $\Aut(\Omega)$ is \emph{strongly irreducible}, in the sense that it does not preserve any finite union of proper subspaces of $\PR(\V)$ --- in this case $\Omega$ is said to be \emph{irreducible}.
\end{itemize}

Let us assume that $\Omega$ is irreducible. Combining work of Koecher \cite{koecher1999minnesota}, Vinberg \cite{Vinberg65} and Benoist \cite{CD2} yields the following dichotomy:
\begin{itemize}
 \item either $\Aut(\Omega)\subset\PGL(\V)$ is a semi-simple Lie subgroup that acts transitively on $\Omega$, in which case $\Omega$ is called \emph{symmetric};
 \item or $\Aut(\Omega)\subset\PGL(\V)$ is a discrete Zariski-dense subgroup.
\end{itemize}

If $\Omega$ is symmetric, then it naturally identifies with the Riemannian symmetric space of $\Aut(\Omega)$, and there is yet another natural dichotomy: namely, either  $\Aut(\Omega)$ has real rank $1$, in which case $\Omega$ is an ellipsoid and $\Aut(\Omega)$ is isomorphic to $\PO(n,1)$ for $n=\dim(\V)-1$, or $\Aut(\Omega)$ has real rank greater than one, it is isomorphic to $\PGL(n,\mathbb K)$ for some $n\geq 3$, and for $\mathbb K=\R$, $\C$, or the classical division algebra of quaternions, or of octonions if $n=3$ (see for instance \cite[\S2.4]{benoist_survey}).

Recently, A.\,Zimmer proved the following higher-rank rigidity result \cite[Th.\,1.4]{zimmer_higher_rank}, analogous to a celebrated result in Riemannian geometry by Ballmann \cite{ballmann_higher_rank} and Burns--Spatzier \cite{burns_spatzier_higher_rank}. If $\Omega$ is not symmetric, then it is \emph{rank-one} in the following sense.

\begin{defi}\label{def:rk1}
 A divisible convex set $\Omega\subset\PR(\V)$ is said to be rank-one if there exists in $\partial\Omega$ a \emph{strongly extremal point}, namely a point $\xi\in\partial\Omega$ such that $[\xi,\eta]\cap\Omega$ is non-empty for any $\eta\in\partial\Omega\smallsetminus\{\xi\}$ (in other words, $\xi$ is ``visible'' from any other point of the projective boundary).
\end{defi}

The notion of rank-one divisible convex sets (and more generally of rank-one geodesics, automorphisms, groups of automorphisms, quotients of properly convex open sets, which we do not define here)  was developped by M.\,Islam \cite{islam_rank_one} and Zimmer \cite{zimmer_higher_rank}, who established other characterisations of this property; see also \cite{topmixing,mesureBM} for more characterisations. 

It is elementary to check that reducible divisible convex sets and symmetric irreducible divisible convex sets with higher-rank automorphism groups are not rank-one (see \eg \cite[\S2.7 \& \S7]{topmixing}). These convex sets are hence called \emph{higher-rank}. On the other hand, ellipsoids are rank-one.

\subsection{The proximal limit set}\label{sec:prox}

Let $\Omega\subset\PR(\V)$ be an irreducible divisible convex set. The present note concerns an important $\Aut(\Omega)$-invariant compact subset of the projective boundary $\partial\Omega$, called the \emph{proximal limit set} and denoted by $\Lambda^{\prox}_\Omega$. Recall that a projective transformation $g\in\PGL(\V)$ is called \emph{proximal} if it has an attracting fixed point in $\PR(\V)$. 

\begin{defi}
 Let $\Omega\subset\PR(\V)$ be an irreducible divisible convex set. The proximal limit set of $\Omega$ is the closure of the set of attracting fixed points of proximal elements of $\Aut(\Omega)$.
\end{defi}

By work of Vey \cite[Prop.\,3]{vey} and Benoist \cite[Lem.\,3.6.ii]{BenoistPropAsymp}, the proximal limit set is also
\begin{itemize}
 \item the closure of the set of extremal points of $\overline\Omega$;
 \item the closure of the set of attracting fixed points of proximal elements of $\Gamma$, for any cocompact closed subgroup $\Gamma\subset\Aut(\Omega)$;
 \item the smallest (for inclusion) closed $\Gamma$-invariant non-empty  subset of $\PR(\V)$ for any cocompact closed subgroup $\Gamma\subset\Aut(\Omega)$.
\end{itemize}
If $\Omega$ is an ellipsoid, \ie a rank-one symmetric divisible convex set, then $\Lambda^{\prox}_\Omega=\partial\Omega$ and $\Aut(\Omega)$ acts transitively on it. If $\Omega$ is a higher-rank symmetric irreducible divisible convex set, then $\Lambda^{\prox}_\Gamma$ is an analytic submanifold of $\PR(\V)$ of dimension less than $\dim(\V)-2$, and hence is a proper subset of $\partial\Omega$ (see \cite[\S7]{topmixing}), on which $\Aut(\Omega)$ acts transitively.

Our goal is to prove the following result.

\begin{thm}\label{thm:limset=bord}
 Let $\Omega\subset\PR(\V)$ be a rank-one divisible convex set. Then $\Lambda^{\prox}_\Omega=\partial\Omega$.
\end{thm}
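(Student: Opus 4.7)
My plan is to argue by contradiction using the strongly extremal point. First, a strongly extremal point $\xi$ is automatically extremal: if $\xi$ lay in the relative interior of a non-trivial segment $[\eta_1,\eta_2]\subset\partial\Omega$, then the segment $[\xi,\eta_2]$ would lie entirely in $\partial\Omega$, violating visibility of $\xi$ from $\eta_2$. Hence $\xi\in\Lambda^{\prox}_\Omega$ by the first characterization recalled before the theorem, and minimality (the third characterization) gives $\Lambda^{\prox}_\Omega=\overline{\Gamma\cdot\xi}$ for any cocompact discrete $\Gamma\subset\Aut(\Omega)$ --- such $\Gamma$ exists and is strongly irreducible by the Vey--Koecher--Vinberg--Benoist dichotomies recalled above, and the ellipsoid case is already handled by a remark in the excerpt, so one may assume $\Omega$ non-symmetric. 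It thus suffices to show $\overline{\Gamma\cdot\xi}=\partial\Omega$. Suppose for contradiction that there is $p\in\partial\Omega\smallsetminus\overline{\Gamma\cdot\xi}$.

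By strong extremity, the projective line through $\xi$ and $p$ meets $\Omega$, giving a biinfinite Hilbert geodesic $\tau\colon\R\to\Omega$ with $\tau(-\infty)=\xi$ and $\tau(+\infty)=p$. Cocompactness of $\Omega/\Gamma$ supplies $\gamma_n\in\Gamma$ and $t_n\to+\infty$ such that, after extraction, $\gamma_n\tau(t_n)\to y^*\in\Omega$; since $\tau(t_n)\to p$, this yields $\gamma_n^{-1}(y^*)\to p$ in $\PR(\V)$.

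The technical heart of the argument is to establish rank-one asymptotic dynamics for $(\gamma_n^{-1})$: after choosing lifts in $\GL(\V)$ and extracting further, the rescaled operators $\gamma_n^{-1}/\|\gamma_n^{-1}\|$ should converge in $\End(\V)$ to a rank-one operator $\pi$ whose image is the line spanned by a lift of $p$ and whose kernel is some hyperplane $H\subset\V$; equivalently, the ratio $\mu_1(\gamma_n^{-1})/\mu_2(\gamma_n^{-1})$ of the two largest singular values tends to $+\infty$. I expect this to be the main obstacle, and the step where the rank-one hypothesis enters essentially: because $\tau$ terminates at the strongly extremal $\xi$, the boundary $\partial\Omega$ is ``maximally pinched'' in the Cartan direction determined by $\tau$, and an analysis of the Cartan projection of $\gamma_n^{-1}$ via the Hilbert geometry of $\tau$ should force the separation of the top two singular values. (In higher-rank symmetric divisible convex sets, the analogous sequences have higher-rank limit operators, which is precisely why $\Lambda^{\prox}_\Omega$ is a proper subset of $\partial\Omega$ there.)

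Granting this, the conclusion is immediate. By strong irreducibility, the non-empty $\Gamma$-invariant closed set $\Lambda^{\prox}_\Omega$ is not contained in any proper projective subspace, so one may pick $\eta\in\Lambda^{\prox}_\Omega\smallsetminus\PR(H)$. Then $\gamma_n^{-1}(\eta)\to[\pi(\eta)]=p$ in $\PR(\V)$, while $\gamma_n^{-1}(\eta)\in\Lambda^{\prox}_\Omega$ by $\Gamma$-invariance. Closedness of $\Lambda^{\prox}_\Omega$ then forces $p\in\Lambda^{\prox}_\Omega$, contradicting our choice of $p$.
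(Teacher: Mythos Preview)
Your approach is genuinely different from the paper's, and the gap you yourself flag is real and, as far as I can see, fatal as stated.

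The crux is your claim that, after extraction, $\gamma_n^{-1}/\|\gamma_n^{-1}\|$ converges to a \emph{rank-one} endomorphism $\pi$ with image the line through $p$. Write $\gamma_n=k_na_nl_n$ in $KAK$ form with $a_n=\diag(\sigma_1^{(n)},\dots,\sigma_{d+1}^{(n)})$. The rank-one condition for $\gamma_n^{-1}$ is $\sigma_d^{(n)}/\sigma_{d+1}^{(n)}\to\infty$, and the projective image of the limit $\pi$ intersects $\overline\Omega$ in a closed face containing $p$. That face, and hence the rank of $\pi$, is governed by the geometry of $\partial\Omega$ near the \emph{attracting} end of $\gamma_n^{-1}$, which is $p$, not near the repelling end (which is where $\xi$ enters). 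Since $p$ was chosen in $\partial\Omega\smallsetminus\Lambda^{\prox}_\Omega$, it may well sit in a non-trivial face $F_\Omega(p)$, in which case $\pi$ will typically have rank $\dim F_\Omega(p)+1\geq 2$. Concretely, the Hilbert ball $B_\Omega(\gamma_n^{-1}y^*,R)=\gamma_n^{-1}B_\Omega(y^*,R)$ accumulates (by lower semi-continuity of $d_{\overline\Omega}$, Fact~\ref{semi-continuite superieure}) onto a subset of $\overline B_{\overline\Omega}(p,R)$, which is a genuine ball in $F_\Omega(p)$ rather than a point; this is exactly the obstruction to $\mu_1(\gamma_n^{-1})/\mu_2(\gamma_n^{-1})\to\infty$. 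Your heuristic that the strongly extremal endpoint $\xi$ forces ``maximal pinching'' points in the wrong direction: $\xi$ controls the limit of $\gamma_n$, not of $\gamma_n^{-1}$.

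Even if you drop the rank-one claim, the argument does not close. For $\eta\in\Lambda^{\prox}_\Omega\smallsetminus\PR(\Ker\pi)$ you only obtain $\gamma_n^{-1}\eta\to[\pi(\eta)]\in\PR(\mathrm{Im}\,\pi)\cap\overline\Omega\subset\overline{F_\Omega(p)}$, so at best you conclude that $\Lambda^{\prox}_\Omega$ meets every closed face of $\partial\Omega$. This is strictly weaker than $\Lambda^{\prox}_\Omega=\partial\Omega$: a segment $[a,b]\subset\partial\Omega$ with $a,b\in\Lambda^{\prox}_\Omega$ and $(a,b)\cap\Lambda^{\prox}_\Omega=\emptyset$ would be consistent with it.

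This difficulty --- that points of $\partial\Omega\smallsetminus\Lambda^{\prox}_\Omega$ can have non-trivial faces --- is precisely what the paper's machinery is built to handle. Instead of trying to force a rank-one limit, the paper shows (Lemma~\ref{lem:ombre faible}) that every shadow $\mathcal O_R(x,y)$ meets $\Lambda^{\prox}_\Omega$, and then uses a delicate analysis of how the $d_{\overline\Omega}$-balls foliate a neighbourhood in $\partial\Omega$ (Lemmas~\ref{lem:zorn} and \ref{lemme_du_grain_de_sable}) to show that a hypothetical open set $U\subset\partial\Omega\smallsetminus\Lambda^{\prox}_\Omega$ would swallow an entire shadow, contradicting the Shadow lemma. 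The face structure is confronted head-on rather than circumvented.
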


Combined with Zimmer's higher-rank rigidity theorem \cite[Th.\,1.4]{zimmer_higher_rank}, Theorem~\ref{thm:limset=bord} yields the following answer to a question of Benoist \cite[Prob.\,5]{exos_benoist}.

\begin{cor}
 Let $\Omega\subset\PR(\V)$ be a non-symmetric irreducible divisible convex set.  Then \linebreak $\Lambda^{\prox}_\Omega=\partial\Omega$.
\end{cor}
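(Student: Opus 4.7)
The strategy is to show that, for every $\eta\in\partial\Omega$, there exists a sequence of attracting fixed points of proximal elements of a discrete cocompact subgroup $\Gamma\subset\Aut(\Omega)$ that converges to $\eta$. Since $\Lambda^{\prox}_\Omega$ is closed and contains all such attracting fixed points by definition, this yields $\partial\Omega\subset\Lambda^{\prox}_\Omega$; the reverse inclusion is immediate.

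Fix $\eta\in\partial\Omega$ and a base point $x_0\in\Omega$. The open projective segment $(x_0,\eta)$ lies entirely in $\Omega$, since $x_0$ is an interior point of the convex set $\overline{\Omega}$, and parametrized by Hilbert arc length it defines a geodesic ray $r\colon[0,\infty)\to\Omega$ with $r(t)\to\eta$ as $t\to+\infty$. By cocompactness of $\Gamma$ acting on $\Omega$, I choose $\gamma_n\in\Gamma$ with $\gamma_n\cdot x_0$ at bounded Hilbert distance from $r(n)$, so that $\gamma_n$ acts on a neighbourhood of $r$ approximately as a translation by $n$ units along the ray. The main step is to show that, upon passing to a subsequence, $\gamma_n$ is proximal on $\PR(\V)$ with an attracting fixed point $\xi_n^+$ converging to $\eta$; each $\xi_n^+$ then belongs to $\Lambda^{\prox}_\Omega$, and passing to the limit gives $\eta\in\Lambda^{\prox}_\Omega$.

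The proximality upgrade is the heart of the proof and is where the rank-one hypothesis intervenes essentially. Fix a strongly extremal point $\xi_0\in\partial\Omega$ provided by Definition~\ref{def:rk1}; strong extremality is preserved by $\Aut(\Omega)$, so the orbit $\Gamma\cdot\xi_0\subset\partial\Omega$ consists entirely of strongly extremal points. Such a point cannot lie on any positive-dimensional face of $\overline{\Omega}$: otherwise the face would be a non-trivial segment in $\partial\Omega$ emanating from it, contradicting the visibility condition $[\xi_0,\zeta]\cap\Omega\neq\emptyset$ for every $\zeta$ on that face. Using the orbit $\Gamma\cdot\xi_0$ to probe the limiting behaviour of the tracking sequence, one then aims to show that any subsequential limit in $\End(\V)$ of normalized linear representatives of $\gamma_n$ is a rank-one endomorphism, with projective image $\{\eta\}$ and projective kernel a hyperplane disjoint from $\overline{\Omega}$. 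This rank-one limit translates directly into proximality of $\gamma_n$ for large $n$ and into the convergence $\xi_n^+\to\eta$.

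The main obstacle is making this last step --- the ``closing lemma'' specific to the rank-one setting --- rigorous. In the higher-rank case the tracking sequence would converge to an endomorphism of higher rank whose projective image is a non-trivial face of $\overline{\Omega}$ containing $\eta$; the attracting dynamics would then be smeared over that face, and no proximal element with attracting fixed point arbitrarily close to $\eta$ would need to exist. The rank-one hypothesis, operationalized through the strong extremality of $\xi_0$ and its $\Gamma$-orbit avoiding every non-trivial face, is precisely what rules out this dispersion and forces the limit dynamics to concentrate at the single point $\eta$.
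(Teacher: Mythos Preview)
Your proposal is a strategy sketch rather than a proof, and you correctly flag the central step as an ``obstacle''; that obstacle is a genuine gap, and the paper circumvents it by an entirely different method.

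First, a structural point: the corollary as stated only assumes $\Omega$ is non-symmetric and irreducible. To obtain a strongly extremal point you must first invoke Zimmer's higher-rank rigidity theorem to conclude that $\Omega$ is rank-one. The paper does exactly this --- the corollary is deduced in one line from Theorem~\ref{thm:limset=bord} together with Zimmer's result --- so what you are really attempting is an independent proof of Theorem~\ref{thm:limset=bord}.

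For that theorem, your tracking-sequence argument breaks at the key step. When $\eta$ lies in the relative interior of a non-trivial face $F\subset\partial\Omega$ --- and such faces do exist in non-strictly-convex rank-one examples, for instance Benoist's three-dimensional ones --- a subsequential limit of normalised representatives of $\gamma_n$ will typically have projective image equal to the span of $\overline F$, not the singleton $\{\eta\}$. The existence of a strongly extremal point $\xi_0$, or even of its entire $\Gamma$-orbit, does not force this limit to have rank one: the orbit $\Gamma\cdot\xi_0$ is disjoint from every non-trivial open face, so it exerts no control on the dynamics of sequences that escape \emph{into} $F$. Your final paragraph asserts that strong extremality ``rules out this dispersion and forces the limit dynamics to concentrate at the single point $\eta$'', but no mechanism is supplied, and none is available at this level of the argument.

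The paper's proof of Theorem~\ref{thm:limset=bord} bypasses this difficulty entirely. Rather than trying to manufacture proximal elements with attracting fixed point near a given $\eta$, it argues by contradiction: if some open $U\subset\partial\Omega$ missed $\Lambda^{\prox}_\Omega$, then a weak Shadow lemma (Lemma~\ref{lem:ombre faible}) together with two lemmas on the arrangement of faces (Lemmas~\ref{lem:zorn} and~\ref{lemme_du_grain_de_sable}) would yield a shadow $\mathcal O_R(o,y)$ free of extremal points, contradicting Lemma~\ref{lem:ombre faible}. The face-arrangement lemmas --- not any closing-type argument --- are what handle boundary points lying inside non-trivial faces.
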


Let $\Omega$ be a rank-one divisible convex set. The conclusion of Theorem~\ref{thm:limset=bord} holds trivially if $\Omega$ is symmetric (\ie is an ellipsoid). Thus we may assume that $\Omega$ is not symmetric, hence that $\Aut(\Omega)$ is discrete and Zariski-dense in $\PGL(\V)$ (and finitely generated). 

Benoist \cite[Th.\,1.1]{CD1} proved that $\Aut(\Omega)$ is Gromov-hyperbolic if and only if $\Omega$ is \emph{strictly convex} (\ie all points of $\partial\Omega$ are extremal), if and only if $\partial\Omega$ is $\cal C^1$. In this case, strict convexity implies that $\Lambda_\Omega^{\prox}=\partial\Omega$. One may find in \cite{CD1} more precise results on the regularity of $\partial\Omega$.

Benoist \cite{CD4} also studied non-strictly convex 3-dimensional rank-one divisible convex sets. He constructed examples, and established a precise description of these which implies that $\Lambda^{\prox}_\Omega=\partial\Omega$.

Islam--Zimmer \cite{IZ19relhypb} generalised Benoist's description to higher-dimensional rank-one divisible convex sets, under the assumption that $\Aut(\Omega)$ is relatively hyperbolic, and their result implies that $\Lambda_\Omega^{\prox}=\partial\Omega$ in this case. M.\,Bobb \cite{Bobb} also generalised Benoist's result under the assumption that each non-trivial face of $\Omega$ (see Section~\ref{sec:faces}) is contained in a properly embedded simplex of dimension $\dim(\V)-2$, namely a closed simplex $S\subset \overline\Omega$ whose relative interior (see Section~\ref{sec:faces}) is exactly $S\cap\Omega$; Bobb's result also implies that $\Lambda_\Omega^{\prox}=\partial\Omega$.

\subsection{Organisation of the paper}

In Section~\ref{sec:reminder} we recall basic notions of projective geometry. In particular, we recall the definition of the Hilbert metric on $\Omega$, and how it naturally extends to the projective closure $\overline\Omega$.

In Section~\ref{sec:shadow} we establish a weak, convex projective version (Lemma~\ref{lem:ombre faible}) of Sullivan's celebrated  Shadow lemma. This result can be seen as a consequence of a more standard convex projective version of the Sullivan Shadow lemma proved in \cite[Lem.\,4.2]{mesureBM}, where we develop the theory of Patterson--Sullivan densities in convex projective geometry.

In Section~\ref{sec:grain} we establish two topological results (Lemmas~\ref{lem:zorn} and \ref{lemme_du_grain_de_sable}) which concern the arrangement of faces on the boundary of a convex set.

In Section~\ref{sec:demo} we use Sections~\ref{sec:shadow} and \ref{sec:grain} to prove Theorem~\ref{thm:limset=bord}.

{\bf Acknowledgements} I am grateful to Yves Benoist, Gilles Courtois, Olivier Glorieux and Fanny Kassel for helpful discussions. I thank Cyril Imbert and Jean-Baptiste Hiriart-Urruty for their help and references on convex analysis.

This project received funding from the European Research Council (ERC) under the European Union's Horizon 2020 research and innovation programme (ERC starting grant DiGGeS, grant agreement No 715982).

\section{Reminders in convex projective geometry}\label{sec:reminder}

\subsection{The Hilbert metric}
In the whole paper we fix a real vector space $\V=\R^{d+1}$, where $d\geq 1$. Let $\Omega\subset \PR(\V)$ be a properly convex open set. Recall that $\Omega$ admits an $\Aut(\Omega)$-invariant proper metric called the \emph{Hilbert metric} and defined by the following formula: for $(a,x,y,b)\in\partial\Omega\times\Omega\times\Omega\times\partial\Omega$ aligned in this order,
\begin{equation}\label{Equation : Hilbert}
d_\Omega(x,y)=\frac{1}{2}\log([a,x,y,b]),
\end{equation}
where $[a,x,y,b]$ is the cross-ratio of the four points, given by 
\begin{equation}\label{eq:birap}
 [a,x,y,b]=\frac{\Vert b-x\Vert\cdot \Vert a-y\Vert}{\Vert a-x\Vert\cdot \Vert b-y\Vert},
\end{equation}
where $\Vert\cdot\Vert$ is a norm on affine chart of $\PR(\V)$ containing $\overline\Omega$.

If $\Omega$ is an ellipsoid, then $(\Omega,d_\Omega)$ is the Klein model of the real hyperbolic space of dimension $d$; if $\Omega$ is a $d$-simplex, then $(\Omega,d_\Omega)$ is isometric to $\R^d$ endowed with a hexagonal norm.

\subsection{Faces of the boundary}\label{sec:faces}
Let us recall some basic notions about convexity. For any topological space $X$ and any subspace $Y$, we denote by $\interior_X(Y)$ (\resp $\partial_XY$) the interior (\resp boundary) of $Y$ with respect to $X$; if $X=\PR(\V)$, then we just write $\interior Y:=\interior_{\PR(V)}Y$ (\resp $\partial Y:=\partial_XY$) and call it the interior (\resp boundary) of $Y$.
Let $K\subset\PR(\V)$ be properly convex, \ie convex and bounded in some affine chart. 

\begin{itemize}
 \item The \emph{relative interior} (\resp \emph{relative boundary}) of $K$, denoted by $\interior_{\rel}(K)$ (\resp $\partial_{\rel}K$) is its topological interior (\resp boundary) with respect to the projective subspace it spans. 
 \item For $x\in \overline{K}$, the \emph{open face} of $x$ in $\overline{K}$, denoted by $F_{K}(x)$, consists of the points $y\in \overline{K}$ such that $[x,y]$ is contained in the relative interior of a (possibly trivial) segment contained in $\overline{K}$. The \emph{closed face} of $x$ is $\overline{F}_K(x)=\overline{F_K(x)}$. 
 \item A point $x\in\overline{K}$ is said to be \emph{extremal} (\resp \emph{strongly extremal}) if $F_K(x)=\{x\}$ (\resp $F_K(x)=\{x\}$  and $[x,y]\cap\interior_{\rel}K\neq\emptyset$ for $y\in\partial_{\rel} K\smallsetminus\{x\}$); one says that $K$ is \emph{strictly convex} if all the points in the relative boundary are extremal (and hence strongly extremal).
 \item Assume that $K$ spans $\PR(\V)$ and let $\xi\in\partial K$. A \emph{supporting hyperplane} of $K$ at $\xi$ is a hyperplane which contains $\xi$ but does not intersect $\interior (K)$. Note that there always exists such a hyperplane. 
\end{itemize}

\subsection{Extension of the Hilbert metric to the projective closure}
We extend the definition of the Hilbert distance $d_\Omega$ to pairs of points $x,y$ in the closure $\overline\Omega$. If $y$ is in the open face $F_\Omega(x)$ of $x$, then we set $d_{\overline\Omega}(x,y):=d_{F_\Omega(x)}(x,y)$, where $d_{F_\Omega(x)}$ is the Hilbert metric on $F_\Omega(x)$, seen as a properly convex open subset of the projective subspace it spans. If $y$ is not in $F_\Omega(x)$, then we set $d_{\overline\Omega}(x,y)=\infty$. 

For any $x\in\overline\Omega$ and $R>0$, we denote by $\overline B_{\overline\Omega}(x,R)$ (\resp $B_{\overline\Omega}(x,R)$) the set of points $y\in\overline\Omega$ with $d_{\overline\Omega}(x,y)\leq R$ (\resp $d_{\overline\Omega}(x,y)<R$). The following elementary fact plays an important role in this paper.

\begin{fait}\label{semi-continuite superieure}
 Let $\Omega\subset\PR(\V)$ be a properly convex open set. The function $d_{\overline\Omega}:\overline\Omega\times\overline\Omega\rightarrow\R\cup\{\infty\}$ is lower semi-continuous. As a consequence, for any $R>0$, the map
 \begin{equation*}
  \begin{array}{lllll}
   \overline{B}_{\overline{\Omega}}(\cdot,R) & : & \overline{\Omega} & \longrightarrow & \{\text{compact subsets of }\overline{\Omega}\}\\
   && \xi & \longmapsto & \overline{B}_{\overline{\Omega}}(\xi,R)
  \end{array}
 \end{equation*}
 is upper semi-continuous in the following sense: all accumulation points of $\overline{B}_{\overline{\Omega}}(\eta,R)$ when $\eta\to\xi$  for the Hausdorff topology must be contained in $\overline{B}_{\overline{\Omega}}(\xi,R)$.
\end{fait}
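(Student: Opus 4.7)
My plan is to derive the lower semi-continuity of $d_{\overline\Omega}$ from a unified cross-ratio formula, and then obtain the upper semi-continuity of closed balls formally.

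\textbf{Step 1: a unified cross-ratio formula.} For distinct $x, y\in\overline\Omega$, let $\ell_{xy}$ be the projective line through $x$ and $y$ and let $[a,b]$ be the maximal segment in $\overline\Omega$ contained in $\ell_{xy}$, with $a,x,y,b$ aligned in that order. I would first verify that $y\in F_\Omega(x)$ if and only if $a\neq x$ and $b\neq y$: any segment in $\overline\Omega$ whose relative interior contains $x$ must sit inside $\overline{F_\Omega(x)}$, so in that case $[a,b]$ coincides with the chord of $\overline{F_\Omega(x)}$ through $x,y$ used to define the intrinsic Hilbert metric. Consequently
\[
 d_{\overline\Omega}(x,y)=\tfrac12\log[a,x,y,b],
\]
with the convention that the right-hand side equals $+\infty$ whenever $a=x$ or $b=y$, the cross-ratio being computed by \eqref{eq:birap}.

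\textbf{Step 2: lower semi-continuity.} Fix $(x_n,y_n)\to(x,y)$ in $\overline\Omega\times\overline\Omega$. The case $x=y$ is trivial since then $d_{\overline\Omega}(x,y)=0$, so assume $x\neq y$. Let $[a_n,b_n]$ and $[a,b]$ be the chords associated to $(x_n,y_n)$ and $(x,y)$. By compactness of $\overline\Omega$, extract a subsequence along which $a_n\to a^*$ and $b_n\to b^*$. Closedness of $\overline\Omega$ yields $[a^*,b^*]\subset\overline\Omega$, and since this segment lies on $\ell_{xy}$ and contains $x,y$, maximality of $[a,b]$ forces $a^*\in[a,x]$ and $b^*\in[y,b]$. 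A direct computation from \eqref{eq:birap} shows that with $x,y,\beta$ fixed, $\alpha\mapsto[\alpha,x,y,\beta]$ is strictly decreasing as $\alpha$ moves along $\ell_{xy}$ away from $x$ on the $a$-side, and symmetrically in $\beta$. Therefore
\[
 \lim_n[a_n,x_n,y_n,b_n]=[a^*,x,y,b^*]\geq[a,x,y,b],
\]
the left side being understood as $+\infty$ when $a^*=x$ or $b^*=y$. Taking logs and passing to the infimum over all extracted subsequences gives $\liminf_n d_{\overline\Omega}(x_n,y_n)\geq d_{\overline\Omega}(x,y)$.

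\textbf{Step 3: upper semi-continuity of balls.} If $\eta_n\to\xi$ in $\overline\Omega$ and $z_n\in\overline B_{\overline\Omega}(\eta_n,R)$ converges to $z$, Step~2 gives $d_{\overline\Omega}(\xi,z)\leq\liminf_n d_{\overline\Omega}(\eta_n,z_n)\leq R$, hence $z\in\overline B_{\overline\Omega}(\xi,R)$. This is exactly the set-valued upper semi-continuity statement.

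The only delicate point is Step~2 in the case $d_{\overline\Omega}(x,y)=+\infty$, i.e.\ when $a=x$ or $b=y$: one must check simultaneously that the limit endpoint collapses onto $x$ or $y$ (which follows from $a^*\in[a,x]$ and $b^*\in[y,b]$, forcing $a^*=x$ when $a=x$) and that the corresponding cross-ratios $[a_n,x_n,y_n,b_n]$ diverge, which is immediate from \eqref{eq:birap} since the denominator $\|a_n-x_n\|$ (resp.\ $\|b_n-y_n\|$) tends to $0$ while the remaining factors stay bounded below because $x\neq y$.
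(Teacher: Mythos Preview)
Your proof is correct and follows essentially the same approach as the paper's: both argue via the maximal chord $[a,b]$ through $x,y$, extract limits $a^*,b^*$ of $a_n,b_n$, use maximality of $[a,b]$ to get $[a^*,b^*]\subset[a,b]$, and conclude via monotonicity of the cross-ratio. Your version is simply more explicit---you spell out the unified cross-ratio formula (Step~1), the $+\infty$ case, and the formal deduction of the ball statement (Step~3)---whereas the paper compresses all of this into a few lines.
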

\begin{proof}
 Let $(x_n,y_n)_n$ converge to $(x,y)$ in $\overline\Omega{}^2$ and be such that $(d_{\overline\Omega}(x_n,y_n))_n$ converges; let us show that the limit is at least $d_{\overline\Omega}(x,y)$. We may assume that $x\neq y$ and $x_n\neq y_n$ for all $n$. For each $n$, let $a_n,b_n\in\partial\Omega$ (\resp $a,b\in\partial\Omega$) be such that $a_n,x_n,y_n,b_n$ (\resp $a,x,y,b$) are aligned in this order and $[a_n,b_n]$ (\resp $[a,b]$) is maximal for inclusion among segments of $\overline\Omega$; by definition $d_{\overline\Omega}(x_n,y_n)=\log[a_n,x_n,y_n,b_n]/2$ and $d_{\overline\Omega}(x,y)= \log[a,x,y,b]/2$, where we set $[a,x,y,b]=\infty$ if $a=x$ or $b=y$. Up to extracting, we may assume that $(a_n,b_n)_n$ converges to some $(a',b')\in\partial\Omega^2$. Since $[a,b]$ is maximal in $\overline\Omega$, it contains $[a',b']$, and $a,a',x,y,b',b$ are aligned in this order. The following concludes the proof:
 \[[a_n,x_n,y_n,b_n]\underset{n\to\infty}{\longrightarrow}[a',x,y,b']\geq [a,x,y,b].\qedhere\]
\end{proof}

We will also need the following elementary fact.

\begin{fait}\label{fait:euclVShilb}
 Let $\Omega\subset\PR(\V)$ be a properly convex open set and $\mathbb A\subset\PR(\V)$ an affine chart containing $\overline\Omega$ and equipped with some norm, with induced metric $d_{\mathbb A}$.
 For all $a\in\mathbb{A}$ and $t>0$, we denote by $\mathrm h_a^t$ the homothety of $\mathbb{A}$ with centre $a$ and ratio $t$. 
 Consider $x\in\overline\Omega$ and $0<r<R$. Then
 \begin{enumerate}
  \item \label{item:faceVSboule} $\overline{F}_\Omega(x) \subset \mathrm h_x^\lambda\left(\overline{B}_{\overline{\Omega}}(x,r)\right)$, where $$\lambda=\frac{\diam_{\mathbb A}(F_\Omega(x))(e^{2r}+1)}{d_{\mathbb A}(x,\partial_{\rel}F_\Omega(x))(e^{2r}-1)}>1;$$
  \item \label{item:bouleVSboule} $\mathrm h_x^{\mu}\left(\overline{B}_{\overline{\Omega}}(x,r)\right)\subset\overline{B}_{\overline{\Omega}}(x,R)$ where $\mu=(e^{2R}-1)/(e^{2r}-1)>1$.
 \end{enumerate} 
\end{fait}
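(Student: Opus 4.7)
Both assertions concern pairs of points lying in the same closed face of $\overline\Omega$, and since $d_{\overline\Omega}$ restricted to $\overline F_\Omega(x)$ coincides by definition with the Hilbert metric of $F_\Omega(x)$, viewed as a properly convex open subset of its projective span, one can without loss of generality replace $\Omega$ by $F_\Omega(x)$ and assume $x\in\Omega$. Everything then takes place along 1-dimensional chords through $x$: given $y\in\overline\Omega$, the projective line $\ell$ through $x$ and $y$ meets $\overline\Omega$ in a segment $[a,b]$ with $a,b\in\partial\Omega$, and choosing an affine parametrization with $x$ at the origin, the cross-ratio formula \eqref{Equation : Hilbert}--\eqref{eq:birap} reads
\[
 d_\Omega(x, z) = \frac{1}{2}\log\frac{d_b(d_a + t)}{d_a(d_b - t)}
\]
for $z$ at signed position $t\in[0,d_b)$ on the $b$-side of $x$, with $d_a := \Vert a\Vert$ and $d_b := \Vert b\Vert$. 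All of the argument is now a direct manipulation of this formula.

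For \eqref{item:faceVSboule}, take $y\in\overline F_\Omega(x)$ at position $t$, WLOG $t\in[0,d_b]$. Its preimage $y' := \mathrm h_x^{1/\lambda}(y)$ sits at position $t/\lambda$, and the condition $d_\Omega(x, y')\leq r$, after clearing denominators, is equivalent to
\[
 \lambda \;\geq\; \frac{t\,(d_b + e^{2r}d_a)}{d_a d_b(e^{2r}-1)}.
\]
The worst case on each chord is $t=d_b$, so it suffices to dominate $(d_b+e^{2r}d_a)/(d_a(e^{2r}-1))$ uniformly over all chords through $x$. Applying $d_b\leq D:=\diam_{\mathbb A}(F_\Omega(x))$, $d_a\geq\delta:=d_{\mathbb A}(x,\partial_{\rel}F_\Omega(x))$, and the trivial bound $\delta\leq D$, the right-hand side is at most $(D+e^{2r}\delta)/(\delta(e^{2r}-1))\leq D(e^{2r}+1)/(\delta(e^{2r}-1))$, giving the claimed $\lambda$.

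For \eqref{item:bouleVSboule} the same chord parametrization shows that $d_\Omega(x,y)\leq r$ is equivalent to $t\leq t_r:=\frac{d_a d_b(e^{2r}-1)}{d_b+e^{2r}d_a}$ on the $b$-side (and the analogous inequality on the $a$-side by exchanging $d_a,d_b$). The image $\mathrm h_x^\mu(y)$ lies at position $\mu t$ on the same chord, so $d_\Omega(x,\mathrm h_x^\mu(y))\leq R$ is equivalent to $\mu t\leq t_R$. It thus suffices to produce a constant $\mu$ with $\mu\, t_r\leq t_R$ on every chord through $x$; substitution reduces this to an inequality involving only $d_a/d_b$, $r$, and $R$, which I would verify directly from the formula for $\mu$ given in the statement. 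The only point requiring care is that the estimate must hold uniformly over all chords and over both sides $(a$-side and $b$-side$)$ of $x$, so one works out the worst case of the resulting one-variable inequality; otherwise the argument is entirely mechanical and is the main (minor) obstacle in the proof.
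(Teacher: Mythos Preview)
Your reduction to the face $F_\Omega(x)$ and the chord-by-chord analysis is exactly the paper's approach, and your argument for \eqref{item:faceVSboule} is correct and essentially identical to the paper's.

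For \eqref{item:bouleVSboule}, however, the step you defer as ``entirely mechanical'' is precisely where things go wrong. Carrying out the substitution you set up gives
\[
\frac{t_R}{t_r}=\frac{(e^{2R}-1)(d_b+e^{2r}d_a)}{(e^{2r}-1)(d_b+e^{2R}d_a)}=\mu\cdot\frac{d_b+e^{2r}d_a}{d_b+e^{2R}d_a},
\]
and since $r<R$ the second factor is strictly less than $1$; hence $t_R/t_r<\mu$ on \emph{every} chord, and the inequality $\mu t_r\le t_R$ you need is false. Concretely, take $d_a=d_b=1$: then $t_r=(e^{2r}-1)/(e^{2r}+1)$ and $\mu t_r=(e^{2R}-1)/(e^{2r}+1)$, which exceeds $d_b=1$ as soon as $e^{2R}>e^{2r}+2$, so $\mathrm h_x^{\mu}(\overline B_{\overline\Omega}(x,r))$ need not even stay inside $\overline\Omega$. (The paper's own proof makes the same slip: it asserts $t>(e^{2R}-1)/(e^{2r}-1)$, whereas in fact the reverse inequality holds.) What \emph{is} true, and what the application in Lemma~\ref{lem:zorn} actually uses, is that $t_R/t_r$ is bounded below by $(1-e^{-2R})/(1-e^{-2r})>1$, the infimum being approached as $d_a/d_b\to\infty$ (and symmetrically on the $a$-side as $d_a/d_b\to 0$); so \eqref{item:bouleVSboule} becomes correct with this smaller constant in place of $\mu$.
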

\begin{proof}
 We see $\mathbb A$ as a vector space by setting $x=0$. Let $y\in\partial_{\rel} B_{\overline\Omega}(x,r)$, and consider $a>0$ and $b>1$ such that $-ay$ and $by$ lie in $\partial_{\rel}F_\Omega(x)$. To establish \eqref{item:faceVSboule}, it is enough to prove that 
 $$b\leq \frac{\max(a,b)(e^{2r}+1)}{\min(a,b)(e^{2r}-1)}.$$
 This is an immediate consequence of \eqref{eq:birap}, which implies that $(a+1)b=e^{2r}a(b-1)$, hence that 
 $$b=\frac{ae^{2r}+b}{a(e^{2r}-1)}.$$
 Consider $t\in (1,b)$ such that $ty\in\partial_{\rel} B_{\overline\Omega}(x,R)$. By \eqref{eq:birap}, we have
 \begin{equation*}
  1=\frac{ab(e^{2r}-1)}{ae^{2r}+b} \qquad \text{and} \qquad t=\frac{ab(e^{2R}-1)}{ae^{2R}+b}.
 \end{equation*}
 Thus, 
 \begin{equation*}
  t=\frac{(e^{2R}-1)(ae^{2r}+b)}{(e^{2r}-1)(ae^{2R}+b)}>\frac{e^{2R}-1}{e^{2r}-1},
 \end{equation*}
 and this proves \eqref{item:bouleVSboule}.
\end{proof}

\section{A weak Shadow lemma}\label{sec:shadow}

Let $\Omega\subset\PR(V)$ be a properly convex open set. For $x\in\overline\Omega$, $y\in\Omega$ and $R>0$, we consider the set
\begin{equation*}
\begin{matrix}
 \mathcal{O}_R(x,y)  = \{\xi\in\partial\Omega:[x,\xi]\cap B_\Omega(y,R)\neq\emptyset\},
\end{matrix}
\end{equation*}
which we interpret as the shadow cast on $\partial\Omega$ by the balls of radius $R$ around $y$ with a light source at $x$.

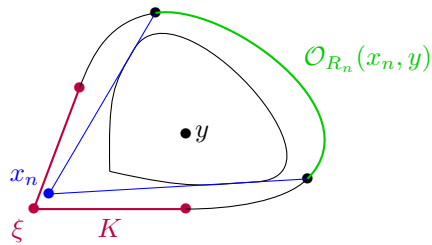
\begin{figure}
 \centering
 \begin{tikzpicture}[scale=2]
  %les coordonnées des points du bord
  \coordinate (xi) at (0,0);
  \coordinate (a) at (1,0);
  \coordinate (b) at (1.8,.2);
  \coordinate (c) at (.8,1.3);
  \coordinate (d) at (.3,.8);
  %les coordonnées des points à l'intérieur
  \coordinate (x) at (.1,.1);
  \coordinate (o) at (1,.5);
  \coordinate (y) at ($(x)!.8!(b)$);
  \coordinate (z) at ($(x)!.8!(c)$);
  \coordinate (xi') at ($(xi)!.5!(o)$);
  %les vecteurs tangents
  %au bord de Omega
  \coordinate (Ta) at ($.3*(a)-.3*(xi)$);
  \coordinate (Tb) at (.3,.3);
  \coordinate (Tc) at (-.3,-.05);
  \coordinate (Td) at ($.3*(xi)-.3*(d)$);
  %sur la sphère centré en o
  \coordinate (Ty) at ($(b)-(y)$);
  \coordinate (Tz) at ($(z)-(c)$);
  \coordinate (Txi') at ($.5*(a)-.5*(xi')+(Ta)$);
  \coordinate (Tmxi') at ($.5*(xi')-.5*(d)+(Td)$);
   
  %dessiner les points
  \draw (xi) node[purple]{$\bullet$} node[below left,purple]{$\xi$};
  \draw (a) node[purple]{$\bullet$};
  \draw (b) node{$\bullet$};
  \draw (c) node{$\bullet$};
  \draw (d) node[purple]{$\bullet$};
  \draw (o) node{$\bullet$} node[right]{$y$};
  \draw (x) node[blue]{$\bullet$} node[above left,blue]{$x_n$};
%  \draw (y) node{$\bullet$};
%  \draw (z) node{$\bullet$};
   
  %dessiner le bord de Omega
  \draw [thick, purple] (xi)--(a) node[below, midway, purple]{$K$};
  \draw (a) .. controls ($(a)+(Ta)$) and ($(b)-.5*(Tb)$) .. (b);
  \draw [thick, green!80!black] (b).. controls ($(b)+1.5*(Tb)$) and ($(c)-1.5*(Tc)$) ..(c);
  \draw (c).. controls ($(c)+(Tc)$) and ($(d)-(Td)$) ..(d);
  \draw [thick, purple] (d)--(xi);
  %dessiner la sphère centrée en o
  \draw (y)..controls ($(y)+2*(Ty)$) and ($(z)-2*(Tz)$)..(z);
  \draw (z)..controls ($(z)+(Tz)$) and ($(xi')-(Tmxi')$)..(xi');
  \draw (xi')..controls ($(xi')+(Txi')$) and ($(y)-(Ty)$)..(y);
  %dessiner tangence de la sphère
  \draw [blue] (x)--(b);
  \draw [blue] (x)--(c);
   
  %écrire l'ombre
  \draw (2.2,1) node[green!80!black]{$\mathcal{O}_{R_n}(x_n,y)$};
 \end{tikzpicture}
 \caption{Illustration of the proof of Lemma~\ref{lem:ombre faible}. The green shadow $\mathcal{O}_{R_n}(x_n,y)$ fills more and more of $\partial\Omega\smallsetminus K$ as $n$ tends to infinity}
 \label{fig:lemombrebis}
\end{figure}

\begin{lemma}\label{lem:ombre faible}
 Let $\Omega\subset\PR(V)$ be a rank-one divisible convex set. Then there exists $R>0$ such that $\mathcal O_R(x,y)$ contains a point of the proximal limit set $\Lambda_\Omega^{\prox}$ (see Section~\ref{sec:prox}) for all $x,y\in\Omega$. 
\end{lemma}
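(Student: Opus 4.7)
I argue by contradiction: suppose no such $R$ works. Then there exist sequences $R_n\to+\infty$ and $x_n,y_n\in\Omega$ with $\mathcal{O}_{R_n}(x_n,y_n)\cap\Lambda^{\prox}_\Omega=\emptyset$ for every $n$. Since shadows and the proximal limit set are $\Aut(\Omega)$-equivariant ($\gamma\,\mathcal{O}_R(x,y)=\mathcal{O}_R(\gamma x,\gamma y)$) and $\Aut(\Omega)$ acts cocompactly on $\Omega$, one may replace $(x_n,y_n)$ by $(\gamma_n x_n,\gamma_n y_n)$ for well-chosen $\gamma_n\in\Aut(\Omega)$ so as to assume that $(y_n)_n$ lies in a fixed compact subset of $\Omega$. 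Up to extraction, $y_n\to y_\infty\in\Omega$ and $x_n\to\xi\in\overline{\Omega}$.

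The case $\xi\in\Omega$ is immediate: then $d_\Omega(x_n,y_n)\to d_\Omega(\xi,y_\infty)$ is bounded, so for $n$ large $x_n\in B_\Omega(y_n,R_n)$, forcing $\mathcal{O}_{R_n}(x_n,y_n)=\partial\Omega$, a contradiction. So assume $\xi\in\partial\Omega$. The heart of the argument (illustrated in Figure~\ref{fig:lemombrebis}) is the claim that every $\eta\in\partial\Omega\smallsetminus\overline{F}_\Omega(\xi)$ belongs to $\mathcal{O}_{R_n}(x_n,y_n)$ for $n$ large. Indeed, since $\eta\notin\overline{F}_\Omega(\xi)$ the open segment $(\xi,\eta)$ meets $\Omega$ at some point $p$; writing $p=(1-s)\xi+s\eta$ in an affine chart and setting $p_n:=(1-s)x_n+s\eta\in[x_n,\eta]$, one has $p_n\to p$, so $p_n\in\Omega$ eventually, and $d_\Omega(p_n,y_n)\to d_\Omega(p,y_\infty)<+\infty$ by continuity of $d_\Omega$ on $\Omega\times\Omega$ (in the spirit of Fact~\ref{semi-continuite superieure}). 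Hence $p_n\in B_\Omega(y_n,R_n)$ for $n$ large, placing $\eta$ in the shadow.

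It remains to produce some $\eta\in\Lambda^{\prox}_\Omega\smallsetminus\overline{F}_\Omega(\xi)$. This is where rank-one enters, through irreducibility: we may assume $\Omega$ is not an ellipsoid (otherwise $\Lambda^{\prox}_\Omega=\partial\Omega$ and the lemma is trivial), so $\Aut(\Omega)$ is discrete and Zariski-dense in $\PGL(\V)$, whence its limit set spans $\PR(\V)$. Since $\overline{F}_\Omega(\xi)$ is contained in a supporting hyperplane at $\xi$, hence in a proper projective subspace, it cannot contain the whole of $\Lambda^{\prox}_\Omega$, which yields the desired $\eta$ and closes the contradiction. The main subtlety is the boundary case: one must correctly identify $\overline{F}_\Omega(\xi)$ as the only possible obstruction to shadow-covering and then invoke global Zariski-density to guarantee that the proximal limit set escapes every proper face; the rest is a routine continuity argument for the Hilbert metric.
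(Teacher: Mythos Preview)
Your overall architecture matches the paper's: argue by contradiction, use cocompactness to fix $y$, pass to a limit $\xi\in\partial\Omega$, and show that the growing shadows eventually cover every boundary point ``visible'' from $\xi$. The gap is in how you identify the obstruction set.

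You assert that $\eta\notin\overline{F}_\Omega(\xi)$ implies $(\xi,\eta)\cap\Omega\neq\emptyset$. This is false. Take $\Omega$ the interior of a square and $\xi$ a vertex: then $\overline{F}_\Omega(\xi)=\{\xi\}$, yet the two edges through $\xi$ lie entirely in $\partial\Omega$. The correct obstruction set is
\[
K:=\{\eta\in\partial\Omega:[\xi,\eta]\subset\partial\Omega\},
\]
which contains $\overline{F}_\Omega(\xi)$ but can be strictly larger; your shadow argument only shows $\Lambda^{\prox}_\Omega\subset K$, not $\Lambda^{\prox}_\Omega\subset\overline{F}_\Omega(\xi)$.

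This breaks your final step: $K$ need not lie in any supporting hyperplane (in a square-based pyramid with apex $\xi$, the set $K$ is the union of the four lateral faces and contains \emph{all} extremal points), so Zariski-density of $\Aut(\Omega)$ alone does not force $\Lambda^{\prox}_\Omega\not\subset K$. You invoke rank-one only nominally (``through irreducibility''), but irreducibility is strictly weaker than rank-one and is not enough here. The paper uses the rank-one hypothesis in earnest: by Definition~\ref{def:rk1} there is a strongly extremal point $\eta_0\in\partial\Omega$, and since $\Aut(\Omega)$ is irreducible one may take $\eta_0\neq\xi$; by the very definition of strong extremality $[\xi,\eta_0]\cap\Omega\neq\emptyset$, so $\eta_0\in\Lambda^{\prox}_\Omega\smallsetminus K$, giving the contradiction. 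Replacing your last paragraph with this observation repairs the proof.
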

\begin{proof}
Recall from Section~\ref{sec:prox} that $\Lambda^{\prox}_\Omega$ is the closure of the set of extremal points of $\partial\Omega$.
By contradiction, suppose that there is a diverging sequence of positive numbers $(R_n)_n$ and sequences of points $(x_n)_n,(y_n)_n$ in $\Omega$ such that for any $n\geq 0$, the set $\mathcal O_{R_n}(x_n,y_n)$ does not contain any extremal point of $\partial\Omega$. Since $\Omega$ is divisible, $\Aut(\Omega)$ acts cocompactly on $\Omega$, and so we may assume that $(y_n)_n$ remains in a compact subset of $\Omega$, and up to extracting, we may further assume that $(y_n)_n$ converges to a point $y\in\Omega$. Up to replacing $R_n$ by $R_n-d_\Omega(y_n,y)$, we may actually assume that $(y_n)_n$ is constant equal to $y$.

Up to extraction, we assume that $(x_n)_n$ converges to some $\xi\in\overline{\Omega}$. If $\xi\in\Omega$, then for $n$ such that ${R_n}\geq d_\Omega(o,\xi)+1$ and $d_\Omega(x_n,\xi)<1$, we have $\mathcal{O}_{R_n}(x_n,y)=\partial\Omega$, which is absurd; hence $\xi\in\partial\Omega$. 

Let $K\subset\partial\Omega$ be the set of points $\eta$ such that $[\xi,\eta]\subset\partial\Omega$. Then
\[\partial\Omega\smallsetminus K\subset \bigcup_n\bigcap_{k\geq n}\mathcal{O}_{R_k}(x_k,y).\]
See Figure~\ref{fig:lemombrebis}. Let $\eta\in\partial\Omega\smallsetminus K$, and $z\in[\xi,\eta]\cap\Omega$. Since $(x_n)_n$ converges to $\xi$, we can find $z_n\in[x_n,\eta]\cap B_\Omega(z,1)$ for any large enough $n$. On the other hand, $R_n\geq d_\Omega(y,z)+2$ for $n$ large. Thus, $z_n\in B_\Omega(y,R_n)$ and hence $\eta\in\mathcal O_{R_n}(x_n,y)$ for any large enough $n$.
 
By assumption, this implies that all extremal points are contained in $K$. Since $\Omega$ is rank-one (see Definition~\ref{def:rk1}) and $\Aut(\Omega)$ is irreducible, $\partial\Omega$ contains a strongly extremal point which is different from $\xi$. Such a point cannot lie in $K$; this yields a contradiction.
\end{proof}

\section{Two lemmas on general properly convex open sets}\label{sec:grain}

In this section we prove two lemmas on the arrangement of faces on the boundary of a general properly convex open subset of $\PR(\V)$, which is not necessarily divisible.

Let us first give a family of examples of non-divisible properly convex open subsets of $\PR(\R^4)$ that one may wish to keep in mind while reading the lemmas of this section.
Let $f:\R\rightarrow [1,\infty)$ be a $2\pi$-periodic upper semi-continuous function. Let $\Omega_f$ be the interior of the convex hull in $\R^3$ of 
\begin{equation}\label{eq:cylbiz}
\{(\cos(\theta),\sin(\theta),f(\theta)) : \theta\in\R\}\cup \{(\cos(\theta),\sin(\theta),-f(\theta)) : \theta\in\R\}.
\end{equation}
Since $f$ is upper semi-continuous and $2\pi$-periodic, it is bounded, and so is $\Omega_f$.
Let us identify $\R^3$ with an affine chart of $\PR(\R^4)$, so that $\Omega_f$ is a properly convex open subset of $\PR(\R^4)$. One can check that \eqref{eq:cylbiz} is exactly the set of extremal points of $\Omega_f$, and that for any $\theta\in \R$, the set $\{(\cos(\theta),\sin(\theta),z) : z\in(-f(\theta),f(\theta))\}$ is an open face of of $\Omega_f$.

\begin{figure}
\centering
\includegraphics[scale=.5]{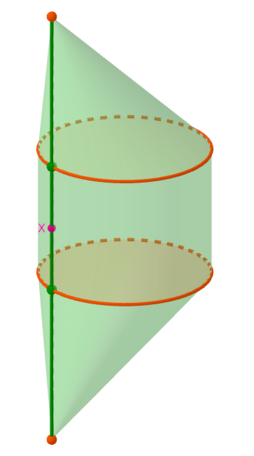}
\includegraphics[scale=.25]{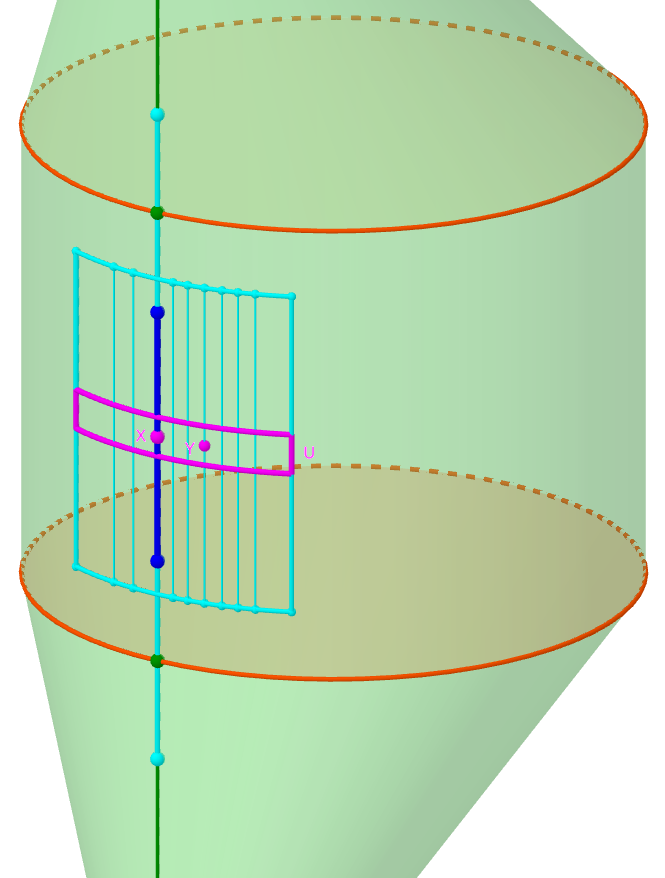}
\caption{On the left: the whole set $\Omega_f\subset\PR(\R^4)$ as in Section~\ref{sec:grain} for a $2\pi$-periodic function $f$ which is constant on $\R\smallsetminus 2\pi\Z$ and discontinuous on $2\pi\Z$. On the right: a zoom on $\Omega_f$ at point $x$, with blue vertical segments representing several $d_{\overline\Omega_f}$-balls.}
\label{fig:omegaf}
\end{figure}

% \subsection{Proof of Lemma~\ref{lemme_du_grain_de_sable}}
% Before we dive into the proof, let us first make the following elementary observation. If $\xi\in U$ is an extremal point of $\partial\Omega$, then $\overline B_{\overline\Omega}(\xi,R_0)=\{\xi\}$, and hence it lies in the interior of any compact neighbourhood $A$ of $\xi$, which in turn is a subset of $\interior_{\partial\Omega}(B_{\overline{\Omega}}(A,R))$. Unfortunately, $U$ does not always contain an extremal point (see for instance cases where $\Omega$ is a polytope, or a cylinder, or is the projective model of a higher-rank symmetric space). We are going to see that the conclusion of Lemma~\ref{lemme_du_grain_de_sable} holds if we consider a point $\xi\in U$ that has a small face, in the sense that $\overline B_{\overline\Omega}(\xi,\frac{R_0+R}2)$ is contained in any accumulation point for the Hausdorff topology of $\overline B_{\overline\Omega}(\eta,R)$ when $\eta$ tends to $\xi$; this corresponds to Step 6 below, where the formulation is different because we identify $U$ with an open subset of an affine space (Step 1). It then remains to check that $U$ always contains such a point: this is the goal of Steps 2-5.

\subsection{Existence of a point on the boundary with a sufficiently small Hilbert ball}

Let $\Omega\subset\PR(\V)$ be a properly convex open set.
We saw in Fact~\ref{semi-continuite superieure} that, for any $R>0$, the map $\overline B_{\overline\Omega}(\cdot,R)$ is upper semi-continuous on $\overline\Omega$. However, it is not continuous in general. For instance, in Figure~\ref{fig:omegaf} on the left, each orange point $x\in\partial\Omega_f$ is extremal, hence $\overline B_{\overline \Omega_f}(x,R)=\{x\}$ for any $R>0$, and orange points accumulate to a green point $y$ which has a non-trivial face, hence $\overline B_{\overline\Omega_f}(y,R)\neq\{y\}$, and so $\overline B_{\overline\Omega_f}(\cdot,R)$ is discontinuous at $y$.

The goal of the next lemma is to show that in any open subset of $\partial\Omega$, one can find a point at which $\overline B_{\overline\Omega}(\cdot,R)$ is ``almost continuous''.

%For a general $\Omega$, we may set $\liminf_{y\to x}\overline B_{\overline\Omega}(y,R)$ (\resp $\limsup_{y\to x}\overline B_{\overline\Omega}(y,R)$) to be the intersection (\resp union) of all accumulation points of $\overline B_{\overline\Omega}(\cdot,R)$ at $x$.

\begin{lemma}\label{lem:zorn}
 Let $\Omega\subset\PR(\V)$ be a properly convex open set, $0<r<R$ and $U\subset\partial\Omega$ a non-empty open subset. Then one can find a point $x\in U$ such that $\overline{B}_{\overline{\Omega}}(x,r)$ is contained in any accumulation point of $\overline{B}_{\overline{\Omega}}(y,R)$ (for the Haudorff topology) when $y$ tends to $x$.%, \ie
% $$\overline{B}_{\overline{\Omega}}(x,r)\subset \liminf_{y\to x}\overline{B}_{\overline{\Omega}}(y,R) \subset \limsup_{y\to x}\overline{B}_{\overline{\Omega}}(y,R) \subset \overline{B}_{\overline{\Omega}}(x,R).$$
\end{lemma}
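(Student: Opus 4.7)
The plan is to prove something strictly stronger than the stated conclusion: I will exhibit a point $x\in U$ at which the set-valued map $F\colon y\mapsto\overline{B}_{\overline{\Omega}}(y,R)$ is continuous for the Hausdorff topology on the space of compact subsets of $\overline{\Omega}$. At such an $x$, the only Hausdorff accumulation point of $F(y)$ as $y\to x$ is $F(x)=\overline{B}_{\overline{\Omega}}(x,R)$, which contains $\overline{B}_{\overline{\Omega}}(x,r)$ since $r<R$. (In fact, the argument only uses $r\le R$.)

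The construction is by Baire category. Fix an affine chart $\mathbb{A}\supset\overline{\Omega}$ with Euclidean distance $d_{\mathbb{A}}$ and a countable dense sequence $(z_n)_{n\in\N}$ in $\overline{\Omega}$, and for each $n$ consider
\[
g_n\colon \overline{U}\longrightarrow [0,\infty),\qquad g_n(y)=d_{\mathbb{A}}\bigl(z_n,\overline{B}_{\overline{\Omega}}(y,R)\bigr).
\]
The upper semi-continuity of $F$ provided by Fact~\ref{semi-continuite superieure} translates into lower semi-continuity of each $g_n$: whenever $y_k\to y$ and $F(y_k)\to K$ in Hausdorff distance (after extraction), one has $K\subset F(y)$, whence $g_n(y_k)\to d_{\mathbb{A}}(z_n,K)\ge d_{\mathbb{A}}(z_n,F(y))=g_n(y)$.

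Since $\overline{U}$ is a compact metric space and therefore a Baire space, the classical fact that a lower semi-continuous real-valued function on a Baire space is continuous on a dense $G_\delta$ subset, combined with a countable intersection, yields a dense $G_\delta$ subset $\mathcal{C}\subset\overline{U}$ on which every $g_n$ is simultaneously continuous. Since $U$ is non-empty and open in $\overline{U}$, I can pick $x\in U\cap\mathcal{C}$.

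To verify Hausdorff continuity of $F$ at $x$, consider any $y_k\to x$ with $F(y_k)\to K$ in Hausdorff topology (always possible up to extraction by compactness). For every $n$, continuity of $g_n$ at $x$ combined with the Lipschitz continuity of the distance-to-a-point function on the Hausdorff space yields
\[
d_{\mathbb{A}}(z_n,K)=\lim_k g_n(y_k)=g_n(x)=d_{\mathbb{A}}(z_n,F(x)).
\]
Since $(z_n)$ is dense in $\overline{\Omega}$ and both $K$ and $F(x)$ are compact subsets of $\overline{\Omega}$, having identical distance functions to a dense set forces $K=F(x)$. The main technical ingredient is thus the standard Baire-theoretic dense-$G_\delta$ statement for lower semi-continuous functions, applied to the countable family $(g_n)$; everything else is routine, and I do not foresee any genuine obstacle.
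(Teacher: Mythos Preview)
Your proof is correct and follows a genuinely different, and in several respects cleaner, route than the paper's.

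You prove the stronger assertion that the set-valued map $F\colon y\mapsto \overline B_{\overline\Omega}(y,R)$ has an actual point of Hausdorff continuity inside $U$, by reducing to a countable family of real-valued lower semi-continuous functions $g_n$ and invoking the classical Baire-category fact that each such function is continuous on a dense $G_\delta$. This is essentially (a proof of) Fort's theorem on generic continuity of upper semi-continuous compact-valued maps, and all the steps you outline go through without difficulty: upper semi-continuity of $F$ from Fact~\ref{semi-continuite superieure} gives lower semi-continuity of each $g_n$; the $g_n$ are bounded on the compact metric space $\overline U$, hence of Baire class~1, hence continuous on a dense $G_\delta$; and coincidence of the distance functions on a dense set forces $K=F(x)$.

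The paper, by contrast, does \emph{not} attempt to find a point of genuine continuity. It first restricts $U$ so that faces of maximal dimension $k$ are dense, obtains uniform lower bounds on those faces, then uses Zorn's lemma to select a \emph{minimal} Hausdorff accumulation set $K\in\mathcal B_R(x_0)$, and finally exploits the strict inequality $r<R$ (via the dilation factor $(e^{2R}-1)/(e^{2r}-1)>1$) to show that $B_r(x_n)\subset K_n$ for all $K_n\in\mathcal B_R(x_n)$ and large $n$. Your approach bypasses all of this geometry and does not even need $r<R$, only $r\le R$; indeed, at your point $x$ the unique accumulation set is $F(x)=\overline B_{\overline\Omega}(x,R)$ itself. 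Incidentally, your argument shows that the paper's motivating remark, that $\overline B_{\overline\Omega_f}(\cdot,R)$ could be ``discontinuous everywhere on $U$'' when $f$ is, is vacuous: an upper semi-continuous $f$ on $\R$ is automatically continuous on a dense $G_\delta$, so such an $f$ does not exist. The trade-off is that the paper's argument is self-contained and hands-on, while yours imports a (standard) piece of descriptive-topology machinery.
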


Note that if $x\in\partial\Omega$ is an extremal point, then $\overline B_{\overline\Omega}(x,r)=\{x\}$, and so $\overline B_{\overline\Omega}(\cdot,R)$ is continuous at $x$. Thus, the lemma is immediate when $U$ contains an extremal point.

Suppose $\Omega=\Omega_f$ for some $2\pi$-periodic upper semi-continuous function $f:\R\rightarrow [1,\infty)$, consider the open subset $U=\{(\cos(\theta),\sin(\theta),z) : z\in (-1,1), \ \theta\in\R\}\subset\partial\Omega$, and consider $\theta\in (0,2\pi)$, $z\in(-1,1)$ and $x=(\cos(\theta),\sin(\theta),z)$. Fix $R>0$. Then $\overline B_{\overline\Omega}(\cdot,R)$ is continuous at $x$ if and only if $f$ is continuous at $\theta$. In particular, if $f$ is discontinuous everywhere, then $\overline B_{\overline\Omega_f}(\cdot,R)$ is discontinuous everywhere on $U$.
Proving Lemma~\ref{lem:zorn} in the case $\Omega=\Omega_f$ roughly amounts to proving that  for any $\epsilon>0$, we can find $\theta_\epsilon\in\R$ at which $f$ is ``$\epsilon$-almost continuous'', \ie such that $$f(\theta_\epsilon)-\epsilon\leq \liminf_{\theta\to\theta_\epsilon}f(\theta)\leq \limsup_{\theta\to\theta_\epsilon}f(\theta)\leq f(\theta_\epsilon).$$

\begin{proof}
Fix an affine chart $\mathbb A$ that contains $\overline\Omega$, and a norm on $\mathbb A$ whose associated metric is denoted by $d_{\mathbb A}$, with associated balls denoted by $B_{\mathbb A}(x,t)$ for $x\in \mathbb A$ and $t>0$. For the rest of this proof, we set $B_t(x)=\overline B_{\overline\Omega}(x,t)$ for $x\in\overline\Omega$ and $t>0$,  and denote by $\cal B_t(x)$ the set of accumulation points (for the Hausdorff topology) of $B_t(y)$ when $y$ tends to $x$.

{\bf First step:} We reduce $U$ to control the dimension of faces.
 
 Let $k$ be the largest integer such that $\{ x\in U : \dim F_\Omega(x)\geq k\}$ has non-empty interior in $U$. Let $x_0\in U$ and $\epsilon>0$ be such that $\overline{B}_{\mathbb{A}}(x_0,2\epsilon)\cap \partial\Omega$ is contained in this interior, and $\dim F_\Omega(x_0)=k$. Note that $D:=\{x:\dim F_\Omega(x)=k\}\cap \overline{B}_{\mathbb{A}}(x_0,\epsilon)\cap \partial\Omega$ is dense in $U':=\overline{B}_{\mathbb{A}}(x_0,\epsilon)\cap \partial\Omega$. Up to taking $\epsilon$ even smaller, we can assume that $\diam_{\mathbb A}\overline\Omega\leq \epsilon^{-1}$.
 
{\bf Second step:} We bound from below the size of faces of dimension $k$.
 
 Consider for this step $x\in D$. We denote by $\mathbb A_x$ the affine subspace of $\mathbb A$ spanned by $F_\Omega(x)$, which has dimension $k$. Any point in $\partial_{\rel}F_\Omega(x)$ has a face of dimension strictly less that $k$, hence is not in $\overline{B}_{\mathbb{A}}(x_0,2\epsilon)$ by definition of $x_0$ and $\epsilon$. 
 By triangular inequality, this implies that $$B_{\mathbb A}(x,\epsilon)\cap\mathbb A_x \subset F_\Omega(x)\subset \mathbb A_x.$$
 
 Set $\lambda:=\epsilon^{-2}(e^{2R}+1)/(e^{2R}-1)>1$. 
 For all $a\in\mathbb{A}$ and $t>0$, we denote by $\mathrm h_a^t$ the homothety of $\mathbb{A}$ with centre $a$ and ratio $t$. By Fact~\ref{fait:euclVShilb}.\ref{item:faceVSboule}, 
 \[\overline{F}_\Omega(x) \subset \mathrm h_x^\lambda\left(B_R(x)\right).\]
 As a consequence, we have 
 \begin{equation}\label{inegalite1}
 B_{\mathbb A}(x,\epsilon/\lambda)\cap\mathbb A_x \subset B_R(x)\subset \mathbb A_x.
 \end{equation}

 By upper semi-continuity of $B_R$ (Fact~\ref{semi-continuite superieure}) and the above \eqref{inegalite1}, any accumulation point of $B_{\mathbb A}(y,\epsilon/\lambda)\cap\mathbb A_y$ (for the Hausdorff topology) when $y\in D$ tends to $x$ is contained in $B_R(x)\subset \mathbb A_x$. One may easily deduce that the map $y\in D \mapsto \overline B_{\mathbb A}(y,\epsilon/\lambda)\cap\mathbb A_y$ is continuous for the Hausdorff topology.
 
 By upper semi-continuity of $B_R$ and density of $D$, any element $K\in\cal B_R(x)$ contains the limit of some sequence $(B_R(x_n))_n$ where $(x_n)_n\subset D$ converges to $x$. By \eqref{inegalite1}, this implies that
 \begin{equation}\label{inegalite1'}
 B_{\mathbb A}(x,\epsilon/\lambda)\cap\mathbb A_x \subset K\subset B_R(x)\subset  \mathbb A_x,
 \end{equation}
 hence $K$ has dimension $k$.
 
{\bf Third step:} We find a minimal element in $\cal B_R(x_0)$.
 
 Let us show that $\cal B_R(x_0)$ contains an element which is minimal for inclusion; by the Zorn lemma, it is enough to show that for every totally ordered subset $\cal A\subset \cal B_R(x_0)$, the intersection $K$ of all elements of $\cal A$ belongs to $\cal B_R(x_0)$.
 
 The Hausdorff topology on the set of compact subsets of $\PR(\V)$ is metrisable, and $K$ is in the closure of $\cal A$, so we can find a sequence $(K_n)_n$ in $\cal A$ that converges to $K$. If $K_n=K$ for some $n$, then $K\in\cal B_R(x_0)$; let us assume the contrary.
 For any $n$, we can find $m>n$ such that $K_m\subset K_n$ since, otherwise, $K\subsetneq K_n\subset K_m$ for any $m>n$ so $(K_m)_m$ would not converge to $K$.
 Thus, up to extraction, we may assume that $(K_n)_n$ is non-increasing.
 
 For each $n$, let $(x_{n,k})_k$ be a sequence converging to $x_0$ such that $(B_R(x_{n,k}))_k$ converges to $K_n$. Then $(B_R(x_{n,n}))_n$ converges to $K$, which thus belongs to $\cal B_R(x_0)$.
 
 Let $K\in \cal B_R(x_0)$ be a minimal element for inclusion, and let $(x_n)_n$ be a sequence in $U'$ converging to $x_0$ such that $(B_R(x_n))_n$ converges to $K$. By density of $D$ in $U'$, upper semi-continuity of $B_R$ and minimality of $K$, we may assume that $(x_n)_n$ is in $D$.

{\bf Fourth step:} We prove that $B_r(x_n)$ is contained in any element of $\cal B_R(x_n)$ for $n$ large enough.
 
 Assume by contradiction that for each $n$ there exists $K_n\in\cal B_R(x_n)$ that does not contain $B_r(x_n)$; since, by the previous step, $K_n$ and $B_r(x_n)$ are convex subsets of $\mathbb A_{x_n}$ that contain $x_n$ in their interior relative to $\mathbb A_{x_n}$, we may find $y_n\in\partial_{\rel}K_n\cap B_r(x_n)$.
 
 Up to extraction, we can assume that $(K_n)_n$ converges to some $K'$ and $(y_n)_n$ converges to some $y$. One can check that $K'\in \cal B_R(x)$. By \eqref{inegalite1'}, the compact convex sets $K'$ and $\{K_n\}_n$ have dimension $k$. According to the following classical and elementary fact, $y$ belongs to $\partial_{\rel}K'$.
 \begin{fait}
  If $(A_n)_n$ is a sequence of $k$-dimensional compact convex subsets of $\mathbb A$ that converges to a $k$-dimensional compact convex set $A$ for the Hausdorff topology, then $(\partial_{\rel}A_n)_n$ converges to $\partial_{\rel}A$.
 \end{fait}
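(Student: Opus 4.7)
The plan is to reduce to the classical case of convex bodies in a fixed Euclidean $k$-space via orthogonal projection onto $\mathbb A_\infty:=\mathrm{aff}(A)$. Since $\dim A=k$, pick $k+1$ affinely independent points $z_0,\dots,z_k\in\interior_{\rel}A$; by Hausdorff convergence, for each $i$ one finds points $z_i^n\in A_n$ with $z_i^n\to z_i$, and these stay affinely independent for $n$ large. Since $\dim A_n=k$, they span $\mathbb A_n:=\mathrm{aff}(A_n)$, and it follows that $\mathbb A_n\to\mathbb A_\infty$ in the natural topology on affine $k$-flats of $\mathbb A$.

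Fix a Euclidean metric on $\mathbb A$ and let $\pi:\mathbb A\to\mathbb A_\infty$ be the orthogonal projection. For $n$ large, $\mathbb A_n$ is transverse to the kernel of $\pi$, so $\pi|_{\mathbb A_n}$ is an affine isomorphism $\mathbb A_n\to\mathbb A_\infty$; let $\psi_n:\mathbb A_\infty\to\mathbb A_n$ be its inverse. Then $\psi_n\to\id_{\mathbb A_\infty}$ uniformly on compact sets, and $B_n:=\pi(A_n)\subset\mathbb A_\infty$ is a compact convex set of dimension $k$ (hence with non-empty interior in $\mathbb A_\infty$), converging to $A$ for the Hausdorff topology of $\mathbb A_\infty$ (since $\pi$ is $1$-Lipschitz and fixes $A$ pointwise).

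It thus suffices to establish the following classical fact: \emph{if compact convex bodies $(K_n)_n$ in a Euclidean space $E$ of dimension $k$ converge in Hausdorff topology to a convex body $K$ with non-empty interior, then $\partial K_n\to\partial K$.} For upper semi-continuity, suppose $x_n\in\partial K_n$ converges to $x$; then $x\in K$, and if $x\in\interior K$ one picks affinely independent $v_0,\dots,v_k\in\interior K$ whose convex hull contains a neighborhood of $x$. For $n$ large, Hausdorff convergence yields $v_i^n\in K_n$ with $v_i^n\to v_i$, and $\conv(v_0^n,\dots,v_k^n)\subset K_n$ contains a neighborhood of $x_n$, contradicting $x_n\in\partial K_n$. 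For lower semi-continuity, fix $p\in\interior K$ (so $p\in\interior K_n$ for $n$ large by the same simplex argument); given $y\in\partial K$, the ray from $p$ through $y$ in $E$ exits $K_n$ at a point $y_n\in\partial K_n$. Any subsequential limit $y'$ of $(y_n)$ lies on this ray and in $K$, and belongs to $\partial K$ by the upper semi-continuity, forcing $y'=y$ by convexity of $K$.

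Applying this fact in $\mathbb A_\infty$ to $(B_n)_n$ yields $\partial_{\mathbb A_\infty}B_n\to\partial_{\rel}A$, whence $\partial_{\rel}A_n=\psi_n(\partial_{\mathbb A_\infty}B_n)\to\partial_{\rel}A$ in $\mathbb A$ by the uniform convergence $\psi_n\to\id$ on a compact neighbourhood of $A$. The conceptual core is the classical fact; the reduction through $\pi$ is routine but uses crucially the hypothesis $\dim A_n=\dim A=k$, which is what ensures $B_n$ has non-empty interior in $\mathbb A_\infty$ (without this, the $B_n$ could collapse inside $A$ and the conclusion would fail).
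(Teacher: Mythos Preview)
The paper does not actually prove this Fact: it is quoted inside the proof of Lemma~4.1 as ``the following classical and elementary fact'' and is used without justification. So there is nothing to compare your argument against.

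Your proof is correct. The reduction to a fixed $k$-flat via orthogonal projection is the natural move, and it is precisely here that both hypotheses $\dim A_n=k$ and $\dim A=k$ are consumed: the first guarantees that $\pi|_{\mathbb A_n}$ is an isomorphism (so $B_n$ has non-empty interior in $\mathbb A_\infty$), and the second is what makes the target statement about convex bodies applicable. Your proof of the ``classical fact'' in a fixed Euclidean space is also sound: the simplex argument for upper semi-continuity is standard, and in the lower semi-continuity step the radial projection from an interior point $p$ is well defined because $p\in\interior K_n$ eventually; the identification $y'=y$ then follows since a half-line from an interior point meets $\partial K$ exactly once. The final transfer back via $\psi_n\to\id$ uniformly on compacta is routine. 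One small stylistic point: you might state explicitly that the $y_n$ stay bounded (because the $K_n$ are uniformly bounded by Hausdorff convergence), which is what justifies passing to a subsequential limit.
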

 
 That $K_n\subset{B}_R(x_n)$ for each $n$ implies that $K'\subset K$, which in turn implies, by minimality of $K$, and because $K'\in\cal B_R(x)$, that $K'=K$. 
 
 Let $\mu=(e^{2R}-1)/(e^{2r}-1)>1$. By Fact~\ref{fait:euclVShilb}.\ref{item:bouleVSboule}, since $y_n\in B_r(x_n)$ for each $n$, we have $\mathrm h_{x_n}^\mu y_n\in B_R(x_n)$. As a consequence, $\mathrm h^\mu_{x_0}y\in K$, which contradicts the fact that $y\in\partial_{\rel} K$, $x_0\in\interior_{\rel}K$ and $\mu>1$.
\end{proof}

\subsection{The Grain of sand lemma}

Consider a properly convex open set $\Omega\subset\PR(\V)$, positive numbers $r<R$, a point $x\in\partial\Omega$ at which $\overline B_{\overline\Omega}(\cdot,R)$ is ``almost continuous'' in the sense of Lemma~\ref{lem:zorn}, and a compact neighbourhood $U$ of $x$ in $\partial\Omega$.

The Grain of sand lemma (Lemma~\ref{lemme_du_grain_de_sable}) says that the collection of balls $\overline{B}_{\overline{\Omega}}(y,R)$ centred at points $y\in U$ ``foliates'' a neighbourhood of ${B}_{\overline{\Omega}}(x,r)$, \ie that no ``grain of sand'' is inserted between the convex ``leaves'' of this ``foliation''. 

To illustrate this idea, we use again Figure~\ref{fig:omegaf}, which represents the set $\Omega=\Omega_f$ (defined at the beginning of Section~\ref{sec:grain}) for a $2\pi$-periodic function $f$ which is constant on $\R\smallsetminus 2\pi\Z$ and discontinuous on $2\pi\Z$. 
On the right of the figure, $U\subset\partial\Omega_f$ is the compact neighbourhood of the pink point $x$ which is delimited by the pink  rectangle on the cylinder. The vertical light blue segments are $d_{\overline\Omega_f}$-balls of radius $R$ centred at points of $U$, while the dark blue segment is a ball of radius $r\in (0,R)$ centred at $x$.
The union $B_{\overline\Omega_f}(U,R)$ of the balls $(B_{\overline\Omega_f}(y,R))_{y\in U}$ is  the region delimited by the light blue rectangle, to which one must add the tall central light blue vertical segment. The set $B_{\overline\Omega_f}(U,R)$ is not open in $\partial\Omega$. Its relative interior $\interior_{\partial\Omega}(B_{\overline\Omega_f}(U,R))$ is the region delimited by the light blue rectangle, and it is foliated by light blue balls for $d_{\overline\Omega_f}$. This relative interior contains the ball ${B}_{\overline{\Omega}_f}(x,r)$.

\begin{lemma}\label{lemme_du_grain_de_sable}
 Let $\Omega\subset\PR(\V)$ be a properly convex open set, $0<r<R$ and $x\in \partial\Omega$ such that $\overline{B}_{\overline{\Omega}}(x,r)$ is contained in any accumulation point of $\overline{B}_{\overline{\Omega}}(y,R)$ (for the Hausdorff topology) when $y$ tends to $x$. Then for any compact neighbourhood $U\subset \partial\Omega$ of $x$,
 \[ {B}_{\overline{\Omega}}(x,r)\subset \interior_{\partial\Omega}(B_{\overline{\Omega}}(U,R)),\]
 where $B_{\overline\Omega}(U,R):=\bigcup_{y\in U}B_{\overline\Omega}(y,R)$ is the uniform $R$-neighbourhood of $U$ for the metric $d_{\overline\Omega}$.
\end{lemma}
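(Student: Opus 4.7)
The plan is to argue by contradiction. Suppose there exists $z \in B_{\overline\Omega}(x,r)$ with $z \notin \interior_{\partial\Omega} B_{\overline\Omega}(U,R)$. Then there is a sequence $(z_n)_n$ in $\partial\Omega\setminus B_{\overline\Omega}(U,R)$ converging to $z$ in $\PR(\V)$, so that $d_{\overline\Omega}(y,z_n)\geq R$ for every $y\in U$ and every $n$. A preliminary case analysis eliminates $z_n\in F_\Omega(x)$: if a subsequence lies in $F_\Omega(x)$, then the continuity of the Hilbert metric on the properly convex open set $F_\Omega(x)$ gives $d_{\overline\Omega}(x,z_n)=d_{F_\Omega(x)}(x,z_n)\to d_{F_\Omega(x)}(x,z)<r<R$, contradicting $d_{\overline\Omega}(x,z_n)\geq R$ since $x\in U$. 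So we may assume $z_n\notin F_\Omega(x)$ for all $n$.

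The remainder of the argument aims to construct, for each large $n$, a point $y_n\in\partial\Omega\cap F_\Omega(z_n)$ with $y_n\to x$ and $d_{\overline\Omega}(y_n,z_n)<R$. Once this is achieved, $y_n\in U$ for $n$ large (because $U$ is a neighborhood of $x$) and hence $z_n\in B_{\overline\Omega}(y_n,R)\subset B_{\overline\Omega}(U,R)$, the desired contradiction.

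To construct $y_n$, I pick $w\in F_\Omega(x)$ on the Hilbert ray from $x$ through $z$ at distance $r$, so that $z\in(x,w)$ strictly and both $x,w\in \overline B_{\overline\Omega}(x,r)$. For any sequence $(\tilde y_k)_k$ in $\partial\Omega$ converging to $x$, the hypothesis of the lemma together with Fact~\ref{semi-continuite superieure} gives, up to extraction, a Hausdorff limit $K$ of $\overline B_{\overline\Omega}(\tilde y_k,R)$ with $\overline B_{\overline\Omega}(x,r)\subset K\subset\overline B_{\overline\Omega}(x,R)$. In particular there exist $x_k^*,w_k^*\in \overline B_{\overline\Omega}(\tilde y_k,R)$ with $x_k^*\to x$ and $w_k^*\to w$; by convexity of Hilbert balls, $[x_k^*,w_k^*]\subset \overline B_{\overline\Omega}(\tilde y_k,R)$, and this segment tends in the Hausdorff topology to $[x,w]$, which contains $z$ in its relative interior. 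A diagonal extraction then provides $y_n:=\tilde y_{k(n)}$ so that $[x_n^*,w_n^*]$ is close enough to $[x,w]$ for $z_n$ --- which itself tends to $z\in(x,w)$ --- to lie in $\overline B_{\overline\Omega}(y_n,R)$; Fact~\ref{fait:euclVShilb}.(2), which provides strict Euclidean room between $\overline B_{\overline\Omega}(y_n,r'')$ and $\overline B_{\overline\Omega}(y_n,R)$ for any $r''<R$ via the homothety $\mathrm h_{y_n}^{\mu}$, then upgrades the weak containment to the strict inequality $d_{\overline\Omega}(y_n,z_n)<R$ by absorbing the small Hausdorff approximation error.

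The main obstacle is this last adaptive/diagonal step: one must arrange the selection of the $y_n$'s so that $y_n$ genuinely belongs to the open face $F_\Omega(z_n)$ of $z_n$ --- otherwise $d_{\overline\Omega}(y_n,z_n)=\infty$ and the construction is vacuous --- while simultaneously controlling both the Hausdorff convergence of $[x_n^*,w_n^*]$ to $[x,w]$ and the rate at which $z_n$ approaches $z$. This is where the full force of the hypothesis on $x$ (lower semi-continuity of the closed $R$-ball at $x$ restricted to $\overline B_{\overline\Omega}(x,r)$), combined with the Euclidean-to-Hilbert comparison of Fact~\ref{fait:euclVShilb}, is essential.
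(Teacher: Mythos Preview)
Your plan correctly isolates the difficulty but does not overcome it. The unjustified step is the claim that a diagonal extraction yields $y_n$ with $z_n\in\overline B_{\overline\Omega}(y_n,R)$. The segment $[x_n^*,w_n^*]$ lies in $\overline B_{\overline\Omega}(y_n,R)\subset\overline F_\Omega(y_n)$, a subset of $\partial\Omega$ of dimension strictly less than $\dim\partial\Omega$. Knowing that $[x_n^*,w_n^*]$ is Hausdorff-close to $[x,w]$ and that $z_n$ is close to $z\in(x,w)$ only tells you that $z_n$ is \emph{near} this segment in the ambient projective metric; it gives no reason whatsoever for $z_n$ to lie \emph{in} the face $F_\Omega(y_n)$. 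If $z_n\notin F_\Omega(y_n)$ then $d_{\overline\Omega}(y_n,z_n)=\infty$ and the construction is vacuous --- exactly the obstacle you flag in your final paragraph but do not resolve. The appeal to Fact~\ref{fait:euclVShilb} cannot repair this: that fact compares Hilbert balls of different radii \emph{within a single face} and supplies no transversal thickness across distinct faces. In short, your argument needs the map $z\mapsto F_\Omega(z)$ to be lower semi-continuous near $z$, and nothing in the hypotheses (which concern $y\to x$, not $y\to z$) provides this.

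The paper handles this via an Invariance of Domain--type argument rather than a direct construction. After projecting a neighbourhood of $\overline F_\Omega(x)$ into a hyperplane, it parametrises the $R$-balls by a small transversal disk $\overline B_{\mathbb A_1}(x',\alpha)$ and, for each $z$ in $B_t(x')$, considers the map $y\mapsto (B_R(y)-z+x')\cap\mathbb A_1$ into the space $\mathrm{CvxCpt}(\mathbb A_1)$. The hypothesis on $x$ ensures this map is well-defined and homotopic (in the parameter $z$) to the map $y\mapsto\{y\}$. A weak homotopy equivalence (Lemma~\ref{Homotopy}) then forces the union of the images to cover a full neighbourhood of $x'$ in $\mathbb A_1$: were some point $p$ missed, the boundary sphere $\partial_{\rel}B_{\mathbb A_1}(x',\alpha)$ would be null-homotopic in $\mathbb A_1\smallsetminus\{p\}$, a contradiction. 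This degree-theoretic reasoning is precisely what guarantees that \emph{every} nearby grain of sand $z_n$ is caught by \emph{some} ball $B_R(y)$ --- the surjectivity your direct approach cannot establish because you have no a priori control over which face $z_n$ lands in.
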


As in the previous section, the lemma holds trivially if $x$ is extremal, since 
$${B}_{\overline{\Omega}}(x,r)=\{x\}\subset \interior_{\partial\Omega}U\subset \interior_{\partial\Omega}(B_{\overline{\Omega}}(U,R)).$$

If $x$ is not extremal, then the situation is more delicate. In fact, the problem is related to the Invariance of Domain theorem. For instance, Lemma~\ref{lemme_du_grain_de_sable} is a consequence of this classical theorem under the assumption that there exists a neighbourhood $U'$ of $x$ such that $\dim(F_\Omega(y))=\dim(F_\Omega(x))$ for any $y\in U'$ (more details on how to apply the Invariance of Domain in this particular case are given in the following proof). This assumption is satisfied when $\Omega=\Omega_f$ for some $2\pi$-periodic upper semi-continuous function $f:\R\rightarrow [1,\infty)$, and $x=(\cos(\theta),\sin(\theta),z)$ for some $\theta\in\R$ and $z\in (-1,1)$.

In the general case, the strategy of proof of Lemma~\ref{lemme_du_grain_de_sable} is similar to one of those of the Invariance of Domain theorem.

\begin{proof}
 We first embed $U$ into a hyperplane of $\PR(\V)$.

 Let $\mathbb{A}$ be an affine chart of $\PR(\V)$ containing $\overline{\Omega}$. Let $\PR(V')$ be a supporting hyperplane of $\Omega$ at $p$, let $p\in\Omega$, let $\psi$ be the projection from $\PR(\V)\smallsetminus\{p\}$ to $\PR(V')$. The map $\psi_{|\partial\Omega}$ is a local homeomorphism onto $\PR(V')$, it is injective on $\overline F_\Omega(x)$, and $\psi(\overline F_\Omega(x))\subset \mathbb{A}':=\mathbb{A}\cap \PR(V')$. As a consequence, there exists a compact neighbourhood $W$ of $\overline F_\Omega(x)$ in $\partial\Omega$ such that $\psi_{|W}$ is an open embedding whose image lies in $\mathbb{A}'$. Moreover, there exists a compact neighbourhood $U_0$ of $x$ such that $\overline B_{\overline\Omega}(y,R)\subset W$ for any $y\in U_0$. We may assume that $U\subset U_0$.
 
 For any $y\in \psi(U)$ and $0<t\leq R$, we let $B_t(y)=\psi(\overline B_{\overline\Omega}(\psi^{-1}(y),t))$, $U'=\psi(U)$ and $x'=\psi(x)$. We want to prove that 
 \[\bigcup_{0<t<r}B_t(x')\subset \interior_{\mathbb A'} \left(\bigcup_{y\in U}B_R(y)\right).\]
 
 Fix any $t\in (0,r)$ and any affine subspace $\mathbb A_1\subset \mathbb A'$ containing $x'$ and transverse to the span of $B_R(x)$. For $s>0$ we denote $B_{\mathbb A_1}(x,s):=\{z\in \mathbb A_1 : d_{\mathbb A'}(x,z)<s\}$. For any two points $p,q\in\mathbb A'$, the difference $p-q$ is a vector of the linear space associated to the affine space $\mathbb A'$, and for any subset $E\subset \mathbb A'$ we denote $E+p-q:=\{e+p-q:e\in E\}$. To conclude the proof it is enough to find $\epsilon>0$ such that for any $z\in B_t(x)$,
 \[B_{\mathbb A_1}(x',\epsilon) + z-x'\subset \bigcup_{y\in U'\cap \mathbb A_1} B_R(y). \]
 
 By assumption that any accumulation point of $B_R(y)$ (for the Hausdorff topology) as $y$ tends to $x'$ contains $B_r(x')$, and because $t<r$, we can find $\alpha>0$ small enough so that $B_R(y)$ intersects $\mathbb A_1 + z-x$ for all $y\in \overline B_{\mathbb A_1}(x',\alpha)$ and $z\in B_t(x')$. Since $B_R$ is upper semi-continuous, the map $(y,z)\in \overline B_{\mathbb A_1}(x',\alpha)\times B_t(x')\mapsto B_R(y)\cap (\mathbb A_1+z-x)$ is also upper semi-continuous.
 
 Let us explain how the rest of the proof works in a particular case, before we proceed to the general case. Let us assume that for any $y\in B_{\mathbb A_1}(x',\alpha)$, the dimension of $B_R(y)$ is the same as that of $B_R(x')$. Fix $z\in B_t(x')$. Up to taking $\alpha$ even smaller, we may further assume that, for any $y\in B_{\mathbb A_1}(x',\alpha)$, the intersection $B_R(y)\cap (\mathbb A_1+z-x)$ is reduced to a singleton that we denote by $\{f(y)\}$. One can check that $y\mapsto B_R(y)\cap (\mathbb A_1+z-x')$ being upper semi-continuous implies that the map $f$ is continuous. Moreover, $f$ is injective since two open faces of $\Omega$ intersect if and only if they coincide. We can conclude the proof of Lemma~\ref{lemme_du_grain_de_sable} by using the Invariance of Domain theorem, which says that $f(B_{\mathbb A_1}(x',\alpha))$ is a neighbourhood of $z=f(x')$ in $\mathbb A_1+z-x'$.
 
 We go back to the general case.
 For any open subset $\cal O$ of an affine space, we denote by $\mathrm{CvxCpt}(\cal O)$ the topological space consisting of non-empty convex compact subsets of $\cal O$, endowed with the weakest topology making upper semi-continuous maps continuous. We consider the following continuous map:
 \begin{equation*}
  \begin{array}{llccl}
   f & : & \overline B_{\mathbb A_1}(x',\alpha)\times {B}_t(x') & \longrightarrow & \mathrm{CvxCpt}(\mathbb{A}_1)\\
   && (y,z) & \mapsto & \left({B}_R(y)-z+x\right)\cap \mathbb{A}_1.
  \end{array}
 \end{equation*}
 
 Note that by definition of ${B}_R$, for all $z\in {B}_t(x)$ and $y\in \overline B_{\mathbb A_1}(x',\alpha)\smallsetminus\{x\}$, we have $f(x,z)=\{x\}$ while $x\not\in f(y,z)$. Therefore we can consider $0<\epsilon<\alpha$ such that $\epsilon< d_{\mathbb{A}_1}(x,f(y,z))$ for all $z\in\overline{B}(x)$ and $y\in\partial_{\rel}\overline{B}_{\mathbb{A}_1}(x,\alpha)$.
 
 To conclude the proof of Lemma~\ref{lemme_du_grain_de_sable}, it is enough to prove that for any $z\in {B}_t(x)$,
 \[\overline{B}_{\mathbb{A}_1}(x',\epsilon)\subset \bigcup_{y\in\overline{B}_{\mathbb{A}_1}(x',\alpha)}f(y,z).\]
 It will be a consequence of the following result, whose proof we postpone until the next section.
 
 \begin{lemma}\label{Homotopy}Let $\cal O$ be an open subset of an affine space. Then the map
  \begin{equation*}
  \begin{array}{lll}
   \cal O & \longrightarrow & \mathrm{CvxCpt}(\cal O)\\
   x & \mapsto &\{x\}
  \end{array}
 \end{equation*}
 is an embedding and a weak homotopy equivalence.
 \end{lemma}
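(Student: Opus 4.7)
The ``upper semi-continuous'' condition unwinds as follows: a map $\Phi : X \to \mathrm{CvxCpt}(\cal O)$ is upper semi-continuous exactly when $\{x : \Phi(x) \subset U\}$ is open in $X$ for every open $U \subset \cal O$. Hence the topology on $\mathrm{CvxCpt}(\cal O)$ is generated by the subbasis $V_U := \{K : K \subset U\}$, $U \subset \cal O$ open. For $s : x \mapsto \{x\}$ one has $s^{-1}(V_U) = U$ and $s(U) = s(\cal O) \cap V_U$, which give both continuity of $s$ and the fact that the subspace topology on $s(\cal O)$ coincides with the original topology of $\cal O$; combined with injectivity of $s$, this yields that $s$ is an embedding.

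\textbf{Weak equivalence, convex case.} When $\cal O$ is itself a convex open subset, fix $x_0 \in \cal O$ and consider the Minkowski homothety
\[ H : \mathrm{CvxCpt}(\cal O) \times [0,1] \to \mathrm{CvxCpt}(\cal O), \qquad H(K,t) = (1-t)K + t x_0, \]
well-defined by convexity of $\cal O$. It contracts $\mathrm{CvxCpt}(\cal O)$ onto $\{\{x_0\}\}$, and continuity in the upper Vietoris topology follows from a compactness argument: given $H(K_0, t_0) \subset U$, the continuity of $(v, t) \mapsto (1-t) v + t x_0$ together with compactness of $K_0$ produces an open neighborhood $V$ of $K_0$ in $\cal O$ and $\delta > 0$ with $(1-t) V + t x_0 \subset U$ whenever $|t - t_0| < \delta$, so $V_V \times (t_0 - \delta, t_0 + \delta)$ maps into $V_U$. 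Since $\cal O$ is itself contractible, $s$ is a map between weakly contractible spaces and is a weak homotopy equivalence.

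\textbf{Globalization and main obstacle.} For arbitrary open $\cal O$, the plan is to cover $\cal O$ by convex opens $\{W_i\}$; each $V_{W_i} \subset \mathrm{CvxCpt}(\cal O)$ is open, finite intersections satisfy $V_{W_{i_1}} \cap \dots \cap V_{W_{i_k}} = V_{W_{i_1} \cap \dots \cap W_{i_k}}$ and each such intersection is $\mathrm{CvxCpt}$ of a convex open set, handled by the previous case; one then glues via a nerve-theorem style argument identifying the nerve of $\{W_i\}$ in $\cal O$ with that of $\{V_{W_i}\}$ in $\mathrm{CvxCpt}(\cal O)$. The main obstacle is that $\{V_{W_i}\}$ does \emph{not} cover $\mathrm{CvxCpt}(\cal O)$: a compact convex $K \subset \cal O$ need not lie in any single $W_i$, and the natural single-valued sections (e.g., the centroid) are discontinuous in the upper Vietoris topology, so one cannot naively contract $\mathrm{CvxCpt}(\cal O)$ to its singletons. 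One must therefore either refine the cover using the pointwise shrinking homotopy $K \mapsto h_x^{1-t}(K)$ for some $x \in K$, which for fixed parameter is continuous in the upper topology and eventually confines $K$ inside any chosen $W_i$, or argue directly on $\pi_n$ by using an approximate-selection theorem of Cellina type to lift $\phi : S^n \to \mathrm{CvxCpt}(\cal O)$ to a continuous $\tilde \phi : S^n \to \cal O$, carefully arranging the interpolating homotopy so that it stays inside $\cal O$. I expect this globalization to be the technically delicate step.
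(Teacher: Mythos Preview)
Your embedding paragraph and the convex case are correct and cleanly written. The general case, however, is not proved: you correctly diagnose that $\{V_{W_i}\}$ fails to cover $\mathrm{CvxCpt}(\cal O)$ and that centroids are discontinuous for this topology, you list two plausible escape routes, and then stop. As written this is a sketch of where the difficulty lies, not a proof.

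The paper's argument is precisely the second route you mention --- ``argue directly on $\pi_n$'' --- but carried out by hand, with no appeal to a Cellina-type theorem. Given a continuous $f:[0,1]^n\to\mathrm{CvxCpt}(\cal O)$, compactness of $[0,1]^n$ together with the subbasis $\{V_U\}$ gives an integer $N$ such that for each vertex $x$ of the $\tfrac1N$-grid there is a convex compact $K_x\subset\cal O$ containing $f(y)$ for all $y$ with $|y_i-x_i|\le\tfrac1N$. Choose a point $p_x\in f(x)\subset K_x$ for each grid vertex, let $g:[0,1]^n\to\cal O$ be the cubewise multilinear interpolation of the $p_x$, and set
\[
H(\cdot,t)=(1-t)f(\cdot)+t\,g(\cdot)
\]
as a Minkowski convex combination. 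On each small cube around a vertex $x$, both $f$ and $g$ take values in $K_x$, so $H$ does too, hence $H$ lands in $\mathrm{CvxCpt}(\cal O)$; continuity in the upper Vietoris topology is checked exactly as in your convex case. Thus every $f$ is homotopic, with face-preserving control, to a singleton-valued map, and a standard cube-lifting criterion (the paper isolates it as a separate fact) converts this into bijectivity of $\pi_n$ for the inclusion $s(\cal O)\hookrightarrow\mathrm{CvxCpt}(\cal O)$.

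So the missing idea in your write-up is simply this explicit subdivision-plus-multilinear-selection; once you see it, the ``technically delicate step'' is a one-paragraph computation, and the nerve-theorem machinery and shrinking homotopies you propose are unnecessary.
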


 Let us fix $z\in{B}_t(x')$ and $ p\in\overline{B}_{\mathbb{A}_1}(x',\epsilon)\smallsetminus\{x'\}$, and assume by contradiction that $p$ is not in $\bigcup_{y\in\overline{B}_{\mathbb{A}_1}(x',\alpha)}f(y,z)$. Then the continuous map
 \begin{equation*}
  \begin{array}{rcl}
   \partial_{\rel} B_{\mathbb{A}_1}(x,\epsilon') & \longrightarrow & \mathrm{CvxCpt}(\mathbb{A}_1\smallsetminus\{p\})\\
   y & \mapsto &f(y,z)
  \end{array}
 \end{equation*}
 is homotopically trivial; it is also homotopic to $y  \mapsto f(y,x')$, which is in turn homotopic to $y\mapsto\{y\}$. By Lemma~\ref{Homotopy}, this means that the inclusion $\partial_{\rel} B_{\mathbb{A}_1}(x',\alpha)\hookrightarrow \mathbb{A}_1\smallsetminus\{p\}$ is homotopically trivial. This is a contradiction because $p\in B_{\mathbb{A}_1}(x,\alpha)$.
\end{proof}
 
\subsection{Proof of Lemma~\ref{Homotopy}}\label{Ssection : equivalence d'homotopie faible}

We use the following fact, which is probably well known to experts. We recall its proof for the reader's convenience.

\begin{fait}\label{Critere pour homotopie}
 Let $p\in Y\subset X$ be a topological space, a subspace and a point. Assume that for any integer $n\geq 0$, for any continuous map $f:[0,1]^n\rightarrow X$, there exists a continuous map $H:[0,1]^{n+1}\rightarrow X$ such that :
 \begin{itemize}
  \item $H(x,0)=f(x)$ for any $x\in[0,1]^n$;
  \item $H([0,1]^n\times\{1\})\subset Y$;
  \item for any face $F\subset [0,1]^n$ (i.e. of the form $F=F_1\times\dots\times F_n$ with $F_i\in\{[0,1],\{0\},\{1\}\}$ for each $1\leq i\leq n$), if $f(F)\subset Y$ (resp. $\{p\}$) then $H(F\times[0,1])\subset Y$ (resp. $\{p\}$).
 \end{itemize}
 Then the inclusion map $\iota:Y\hookrightarrow X$ is a weak homotopy equivalence.
\end{fait}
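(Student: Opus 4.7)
My plan is to verify, using the cubical description of homotopy groups, that the inclusion $\iota: Y \hookrightarrow X$ induces a bijection $\iota_*: \pi_n(Y, p) \to \pi_n(X, p)$ for every $n \geq 0$. Recall that $\pi_n(X, p)$ is the set of homotopy classes of continuous maps $f: [0,1]^n \to X$ with $f(\partial [0,1]^n) = \{p\}$, with homotopies taken through such based maps; the case $n = 0$ just recovers based path components. The hypothesis is tailored precisely to produce the required deformations, one directly from $X$ into $Y$ for surjectivity and a more elaborate one for injectivity.

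For surjectivity, I would let $f: ([0,1]^n, \partial[0,1]^n) \to (X, p)$ be a based map and apply the hypothesis to $f$ to obtain $H: [0,1]^{n+1} \to X$ with $H(\cdot, 0) = f$ and $H([0,1]^n \times \{1\}) \subset Y$. Every proper boundary face $F \subsetneq [0,1]^n$ (a product $F_1 \times \cdots \times F_n$ with at least one $F_i \in \{\{0\}, \{1\}\}$) satisfies $f(F) \subset \{p\}$, so the $\{p\}$-clause yields $H(F \times [0,1]) \subset \{p\}$, and taking the union over all such faces gives $H(\partial [0,1]^n \times [0,1]) = \{p\}$. Thus $H(\cdot, 1)$ is a based map into $(Y, p)$ and $H$ is a based homotopy in $X$ from $f$ to $\iota \circ H(\cdot, 1)$, which shows $\iota_*$ is surjective.

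For injectivity, I would take $f, g: ([0,1]^n, \partial[0,1]^n) \to (Y, p)$ related by a based homotopy $G: [0,1]^{n+1} \to X$, so that $G(\cdot, 0) = f$, $G(\cdot, 1) = g$ and $G(\partial [0,1]^n \times [0,1]) = \{p\}$. Applying the hypothesis to $G$ gives $H: [0,1]^{n+2} \to X$, and I will denote the last two coordinates by $(s, t)$. Among the faces of $[0,1]^{n+1}$, the three faces $[0,1]^n \times \{0\} \times [0,1]$, $[0,1]^n \times \{1\} \times [0,1]$ and $[0,1]^{n+1} \times \{1\}$ are mapped by $G$ into $Y$ (the first two because $f$ and $g$ take values in $Y$), so the $Y$-clause forces $H$ to land in $Y$ on each of these three slabs. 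The $\{p\}$-clause applied to every face contained in $\partial [0,1]^n \times [0,1]$ further yields $H(\partial [0,1]^n \times [0,1]^2) = \{p\}$, so the three slabs are in fact based homotopies in $Y$. Concatenating them in order (from $f = H(\cdot, 0, 0)$ up to $H(\cdot, 0, 1)$, then across the top to $H(\cdot, 1, 1)$, then back down to $g = H(\cdot, 1, 0)$) will produce the desired based homotopy in $Y$ from $f$ to $g$.

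The main obstacle, and indeed the only real difficulty, is the combinatorial bookkeeping of cube faces: tracking which faces of $[0,1]^{n+1}$ are sent by $G$ into $Y$ versus $\{p\}$, and observing that the $\{p\}$-clause of the hypothesis is exactly what preserves the basepoint condition on $\partial[0,1]^n$ throughout every deformation, so that all intermediate maps and all concatenations remain based. No deeper machinery (Serre fibrations, CW approximation, etc.) is required beyond this face analysis.
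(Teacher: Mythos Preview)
Your argument is correct and matches the paper's: both apply the hypothesis directly to $f$ for surjectivity, and to the homotopy $G$ for injectivity (the paper takes the special case $g\equiv p$, which suffices for $n\geq 1$ by the group structure of $\pi_n$), then concatenate the three $Y$-valued pieces of $H$ along $\partial[0,1]^{n+2}$. One notational slip worth fixing: $[0,1]^n\times\{0\}\times[0,1]$ is not a face of $[0,1]^{n+1}$ but the slab $F\times[0,1]\subset[0,1]^{n+2}$ over the face $F=[0,1]^n\times\{0\}$, and the third piece $[0,1]^{n+1}\times\{1\}$ lands in $Y$ by the second bullet of the hypothesis rather than by the face clause, so ``$G$ maps these into $Y$'' should read ``$H$ maps these into $Y$''.
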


\begin{proof}
Let $n$ be a natural number. Let us prove that $\iota_*:\pi_n(Y,p)\rightarrow\pi_n(X,p)$ is surjective. We consider a continuous map $f:[0,1]^n\rightarrow X$ which sends $\partial_{\R^n}[0,1]^n$ to $p$, we want to prove that it is homotopic, relatively to $p$, to a continuous map $[0,1]^n\rightarrow Y$ sending $\partial_{\R^n}[0,1]^n$ to $p$. The homotopy is exactly given by the map $H:[0,1]^{n+1}\rightarrow X$ provided by our assumption.

Let us prove that $\iota_*:\pi_n(Y,p)\rightarrow\pi_n(X,p)$ is injective. We consider continuous map $f:[0,1]^n\rightarrow Y$ and a homotopy $h:[0,1]^{n+1}\rightarrow X$ (sending $\partial_{\R^n}([0,1]^n)\times[0,1]$ to $p$) from $f=h_{|[0,1]^n\times\{0\}}$ to $h_{|[0,1]^n\times\{1\}}$ constant equal to $p$. By assumption we can find a continuous map $H:[0,1]^{n+2}\rightarrow X$ such that:
 \begin{itemize}
  \item $H(x,0)=h(x)$ for any $x\in[0,1]^{n+1}$.
  \item $H([0,1]^{n+1}\times\{1\})\subset Y$.
  \item For any face $F\subset [0,1]^{n+1}$ (i.e. of the form $F=F_1\times\dots\times F_{n+1}$ with $F_i\in\{[0,1],\{0\},\{1\}\}$), if $h(F)\subset Y$ (resp. $\{p\}$) then $H(F\times[0,1])\subset Y$ (resp. $\{p\}$).
 \end{itemize}
Since $h([0,1]^n\times\{0\})\subset Y$, this means that $H([0,1]^n\times\{0\}\times[0,1])\subset Y$. Then $f$ is homotopic in $Y$ to $H_{|[0,1]^n\times\{0\}\times\{1\}}$, which is homotopic in $Y$ to $H_{|[0,1]^n\times\{1\}\times\{1\}}$, which is constant equal to $p$ because $h([0,1]^n\times\{1\})=p$.
\end{proof}

\begin{proof}[Proof of Lemma~\ref{Homotopy}]
 Let an integer $n\geq 1$ and a continuous map $f:[0,1]^n\rightarrow \mathrm{CvxCpt}(\cal O)$. By continuity there is an integer $N\geq1$ such that for each $x\in \{0,\frac{1}{N},\frac{2}{N},\dots,1\}^n$ there is a convex compact set $K_x\subset \cal O$ such that for any $y\in[0,1]^n$, if $|x_i-y_i|\leq\frac{1}{N}$ for $1\leq i\leq n$, then $f(y)\subset K_x$. Fix for each $x\in \{0,\frac{1}{N},\frac{2}{N},\dots,1\}^n$ a point $p_x\in K_x$. We define for each $x\in \{0,\frac{1}{N},\frac{2}{N},\dots,\frac{N-1}{N}\}^n$, for each $y\in[0,1]^n$ and for each $t\in[0,1]$,
 \[H(x+\frac{y}{N},t)=t\sum_{\epsilon\in\{0,1\}^n}\left(\prod_{1\leq i\leq n}(1_{\epsilon_i=1}y_i+1_{\epsilon_i=0}(1-y_i))\right)p_{x+\frac{\epsilon}{N}} + (1-t)f(x+\frac{y}{N}).\]
 And finally we apply Fact~\ref{Critere pour homotopie}.
\end{proof}

\section{Proof of Theorem~\ref{thm:limset=bord}}\label{sec:demo}

Suppose by contradiction that there exists an open subset $U\subset\partial\Omega$ that does not contain any point of $\Lambda^{\prox}_{\Omega}$. 
%By a result of Vey \cite[Prop.\,3]{vey}, this implies that $U$ does not contain any  extremal point (recall that Vey's result, saying that $\Lambda_\Gamma$ coincides with the closure of the set of extremal points, is an elementary consequence of cocompactness of the action of $\Gamma$ and lower semi-continuity of $d_{\overline\Omega}$).
Take $R>0$ from Lemma~\ref{lem:ombre faible} and fix $o\in\Omega$. By Lemmas~\ref{lem:zorn} and \ref{lemme_du_grain_de_sable}, we can find $x\in U$ such that, given any compact neighbourhood $A\subset U$ of $x$, the ball $\overline B_{\overline\Omega}(x,R)$ is contained in the interior of $B_{\overline\Omega}(A,R+1)$ relative to $\partial\Omega$.

By Fact~\ref{semi-continuite superieure}, any accumulation point of $\overline B_{\overline \Omega}(y,R)$ for the Hausdorff topology, as $y$ tends to $x$, is contained in $\overline B_{\overline\Omega}(x,R)$ and hence in the interior of $B_{\overline\Omega}(A,R+1)$ relative to $\partial\Omega$.

The stereographic projection $\overline\Omega\smallsetminus\{o\} \rightarrow \partial\Omega$ sends $\overline B_\Omega(y,R)$ onto the closed shadow $\overline{\mathcal O}_R(o,y)$ for any $y\in \overline\Omega\smallsetminus \overline B_\Omega(o,R)$. By continuity of this stereographic projection, for any sequence $(y_n)_n$ in $\Omega$ converging to $x$, the sequence $(\overline B_{\Omega}(y_n,R))_n$ converges for the Hausdorff topology if and only $(\overline{\cal O}_R(o,y_n))_n$ converges, in which case they have the same limit.

Thus, for any $y\in\Omega$ close enough to $x$, the open shadow $\mathcal O_R(o,y)$ is contained in the interior of $B_{\overline\Omega}(A,R+1)$ relative to $\partial\Omega$, which contains no extremal point since $A$ contains no extremal point. Since $\mathcal O_R(o,y)\subset\partial\Omega$ is open, it does not contain any point of $\Lambda^{\prox}_\Omega$ (which is the closure of the set of extremal points by Section~\ref{sec:prox}). This contradicts Lemma~\ref{lem:ombre faible}.

\bibliographystyle{alpha-mod}
{\small \bibliography{bib}}

\Addresses

\end{document}